\newcommand{\cat}[1]{\mathsf{#1}}
\newcommand{\D}{\cat{D}}
\newcommand{\dbrel}[2]{\cat{D}^\mathrm{b}({#1}/{#2})} 
\newcommand{\kos}[2]{{#1}/\!\!/{#2}}
\newcommand{\bdeg}{\beta\mathrm{deg}} 
\newcommand{\mdeg}{\mu\mathrm{deg}}
\newcommand{\tbetti}[1][\cS]{\beta^{#1}_{\textrm{total}}}
\newcommand{\ev}{\mathrm{ev}}
\newcommand{\odd}{\mathrm{odd}}
\newcommand{\cS}{\mathcal{S}}
\newcommand{\cV}{\mathcal{V}}
\newcommand{\m}{\mathfrak{m}}
\newcommand{\p}{\mathfrak{p}}
\newcommand{\kp}[1][p]{\kappa(\mathfrak{#1})}
\newcommand{\f}{\bm{f}}
\newcommand{\bchi}{\bm{\chi}}
\newcommand{\del}{\partial}
\newcommand{\ot}{\otimes^{\mathsf{L}}}
\newcommand{\ceq}{\coloneqq}
\newcommand{\lb}{\llbracket}
\newcommand{\rb}{\rrbracket}
\newcommand{\la}{\langle}
\newcommand{\ra}{\rangle}
\newcommand{\xra}{\xrightarrow}
\DeclareMathOperator{\crk}{crk}
\DeclareMathOperator{\rank}{rank}
\DeclareMathOperator{\cx}{cx}
\DeclareMathOperator{\px}{px}
\DeclareMathOperator{\coker}{coker}
\DeclareMathOperator{\Spec}{Spec}
\DeclareMathOperator{\Supp}{Supp}
\DeclareMathOperator{\shift}{\mathsf{\Sigma}}
\DeclareMathOperator{\h}{\mathsf{H}}
\DeclareMathOperator{\Hom}{\mathsf{Hom}}
\DeclareMathOperator{\RHom}{\mathsf{RHom}}
\DeclareMathOperator{\Ext}{\mathsf{Ext}}
\DeclareMathOperator{\Tor}{\mathsf{Tor}}
\DeclareMathOperator{\V}{V}
\DeclareMathOperator{\Kos}{\mathsf{Kos}}
\newtheorem{theorem}[subsection]{Theorem}
\newtheorem{proposition}[subsection]{Proposition}
\newtheorem{lemma}[subsection]{Lemma}
\newtheorem{corollary}[subsection]{Corollary}
\newtheorem{introtheorem}{Theorem}
\newtheorem{introcorollary}[introtheorem]{Corollary}
\theoremstyle{definition}
\newtheorem{definition}[subsection]{Definition}
\newtheorem{example}[subsection]{Example}
\newtheorem{chunk}[subsection]{}
\newtheorem{remark}[subsection]{Remark}
\newtheorem{question}[subsection]{Question}
\numberwithin{equation}{subsection}
\author[B.~Briggs]{Benjamin Briggs}
\address{Department of Mathematics,
  University of Utah, Salt Lake City, UT 84112, U.S.A.}
\email{briggs@math.utah.edu}
\author[D.~McCormick]{Daniel McCormick}
\address{Department of Mathematics,
  University of Utah, Salt Lake City, UT 84112, U.S.A.}
\email{mccormic@math.utah.edu}
\author[J.~Pollitz]{Josh Pollitz}
\address{Department of Mathematics,
  University of Utah, Salt Lake City, UT 84112, U.S.A.}
\email{pollitz@math.utah.edu}
\thanks{
  For part of this work the first author was hosted by the
  Mathematical Sciences Research Institute in Berkeley, California,
  supported by the National Science Foundation under Grant No.\
  1928930. The second and third author worked on this project while supported
  by the RTG grant from the National Science Foundation No.\
  1840190, as well as being partly supported by the
  National Science Foundation under Grants No.\ 2001368 and 2002173,
  respectively.
}
\thanks{
  This work has benefited significantly from numerous comments of
  Srikanth Iyengar, for which we are grateful. We also thank a referee for helpful comments on the manuscript. 
}
\keywords{Betti degree, complete intersection dimension, cohomology
  operators, complete intersection, duality, Koszul complex, jump
  loci, support} \subjclass[2020]{13D02, 13D07}
\begin{document}

\title[Cohomological jump loci]{Cohomological jump loci and \\ duality in local algebra}
\date{April 8, 2023}

\begin{abstract}
  In this article a higher order support theory, called the
  \emph{cohomological jump loci}, is introduced and studied for dg
  modules over a Koszul extension of a local dg algebra. The
  generality of this setting applies to dg modules over local complete
  intersection rings, exterior algebras and certain group algebras in
  prime characteristic. This family of varieties generalizes the
  well-studied support varieties in each of these contexts. We show
  that cohomological jump loci satisfy several interesting properties,
  including being closed under (Grothendieck) duality. The main
  application of this support theory is that over a local ring the
  homological invariants of Betti degree and complexity are preserved
  under duality for finitely generated modules having finite complete
  intersection dimension.
\end{abstract}

\maketitle

\section*{Introduction}
Over a local complete intersection ring the minimal free resolution of
a finitely generated module has polynomial growth. More precisely the
Betti numbers are eventually modeled by a quasi-polynomial of period
two. A striking result of Avramov and Buchweitz in \cite{SV}, implicitly contained in \cite{VPD}, is that
the degrees of the quasi-polynomials corresponding to the Betti
numbers of a finitely generated module and its (derived) dual
coincide; see also \cite{LP,Stevenson}. In this article we strengthen
this result by showing their leading terms also agree.

Throughout we fix a surjective map \(\varphi\colon A\to B\) of local
rings with common residue field \(k\). We assume \(\varphi\) is
complete intersection of codimension \(c\) in the sense that its
kernel is generated by an \(A\)-regular sequence of length \(c\). Let
\(M\) be a finitely generated \(B\)-module that has finite projective
dimension over \(A\).

Classical results of Eisenbud~\cite{Eis} and Gulliksen~\cite{G}
associate to \(\varphi\) a ring of cohomology operators
\(\cS=k[\chi_1,\ldots,\chi_c]\), with each \(\chi_i\) residing in
cohomological degree \(2\), in a way that the graded \(k\)-space
\(\Ext_B(M,k)\) is naturally a \emph{finitely generated} graded
\(\cS\)-module. The Hilbert--Serre theorem implies that the Krull
dimension of \(\Ext_B(M,k)\) over \(\cS\) is the degree of the
quasi-polynomial eventually governing the sequence of Betti numbers
\(\beta_i^B(M)\) for \(M\). This value is called the complexity of
\(M\) over \(B\), denoted \(\cx^B M;\) see \cref{d_bdeg} for a precise
definition.

This article concerns the behaviour of this quasi-polynomial with
respect to the derived duality \(M^*=\RHom_B(M,B)\). When \(M\) is
maximal Cohen-Macaulay, this coincides with the ordinary \(B\)-dual
module.

In this notation, it was shown in \cite{SV} that the supports of
\(\Ext_B(M,k)\) and \(\Ext_B(M^*,k)\) over \(\cS\) are the same and
hence \(\cx^BM=\cx^BM^*\); see also \cite{LP,Stevenson} for different
proofs. However the methods in \emph{loc.\@ cit.\@} are not fine
enough to show that the leading coefficients of the quasi-polynomials
corresponding to the Betti numbers of two \(B\)-modules agree.

\begin{introtheorem}\label{i_ma}
  Let \(\varphi \colon A \to B\) be a surjective complete intersection
  map with common residue field \(k\). For a finitely generated
  \(B\)-module \(M\) whose projective dimension over \(A\) is finite,
  the multiplicities of \(\Ext_B(M,k)\) and \(\Ext_B(M^*,k)\) over
  \(\cS\) coincide.  In particular, the leading terms of the
  quasi-polynomials eventually modeling \(\beta_i^B(M)\) and
  \(\beta_i^B(M^*)\) agree.
\end{introtheorem}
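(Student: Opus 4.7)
The plan is to reduce the equality of Hilbert multiplicities to a pointwise statement along $\Spec\cS$, and then to extract this pointwise statement from the duality invariance of the cohomological jump loci that forms the backbone of the paper. First, I would recall that for any finitely generated graded $\cS$-module $E$ the multiplicity admits the classical decomposition
\[
e_\cS(E) \;=\; \sum_{\p} \ell_{\cS_\p}(E_\p)\cdot e(\cS/\p),
\]
where $\p$ runs over the minimal primes of $\Supp_\cS E$ of maximal dimension. The Hilbert--Serre formula then identifies this multiplicity (up to a combinatorial normalization) with the leading coefficient of the quasi-polynomial eventually governing $\beta_i^B(M)$, so the multiplicity statement indeed implies the statement about leading terms. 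Applying the decomposition to $E=\Ext_B(M,k)$ and $E=\Ext_B(M^*,k)$, and invoking the Avramov--Buchweitz theorem \cite{SV} that these two modules share the same support, the theorem becomes equivalent to the claim that at each minimal prime $\p$ of the common support the generic lengths $\ell_{\cS_\p}(\Ext_B(M,k)_\p)$ and $\ell_{\cS_\p}(\Ext_B(M^*,k)_\p)$ coincide.

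These generic lengths are precisely the data that the cohomological jump loci are designed to record. Writing $\V_r(M)\subseteq\Spec\cS$ for the $r$-th jump locus of $M$, the collection $\{\V_r(M)\}_{r\ge 0}$ stratifies $\Spec\cS$ by higher-order fibre data of $\Ext_B(M,k)$, recovering the ordinary support variety as the smallest member of the family and, at each minimal prime $\p$ of the support, encoding enough information to pin down the generic length $\ell_{\cS_\p}(\Ext_B(M,k)_\p)$. Consequently, once one establishes the duality invariance
\[
\V_r(M)\;=\;\V_r(M^*)\qquad\text{for every } r\ge 0,
\]
the two generic lengths must agree at every minimal prime, and the multiplicity statement follows.

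The main obstacle, and indeed the crux of the paper, is therefore this duality invariance of the jump loci themselves, a statement strictly sharper than the duality of supports used in \cite{SV,LP,Stevenson}. I would approach it by working pointwise over $\Spec\cS$: after localizing at a prime $\p$ and specializing the cohomology-operator action to the corresponding homogeneous class, one realizes the $r$-th fibre datum of $\Ext_B(M,k)$ through a Koszul-type construction built on a finite $A$-free resolution of $M$, available because $M$ has finite projective dimension over $A$. The derived duality $M\mapsto M^*=\RHom_B(M,B)$ should then interchange two dual incarnations of this construction, and a local biduality argument along $\varphi$ should supply the required symmetry. The delicate point is verifying that this symmetry is compatible with the $\cS$-module structure fibrewise, so that not merely the supports but the full rank stratifications are exchanged; I expect this to require the full machinery of cohomological jump loci developed in the body of the paper, rather than any direct extension of the support-level arguments.
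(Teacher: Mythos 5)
Your high-level plan is sound in one essential respect: you correctly identify that the theorem should be reduced to the duality invariance of the cohomological jump loci, $\V^r_\varphi(M)=\V^r_\varphi(M^*)$, which is indeed Theorem~\ref{t_ma} in the paper. However, the bridge you propose between the jump loci and the multiplicity statement contains a genuine gap.

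You invoke the associativity (additivity) formula for multiplicity and reduce the theorem to the claim that $\ell_{\cS_\p}(\Ext_B(M,k)_\p)=\ell_{\cS_\p}(\Ext_B(M^*,k)_\p)$ at each minimal prime $\p$ of the common support, and then assert that the jump loci ``encode enough information to pin down'' these generic lengths. This is not so. The jump loci record, at each prime $\p$, only the cohomological rank $\crk_\p(M)=\rank_{\kp}\h(\RHom_B(M,k)\ot_\cS \kp)$, which is the total Betti number of the perfect complex $\RHom_B(M,k)_\p$ over the regular local ring $\cS_\p$. When $\p\ne(0)$ this invariant does not determine the length $\ell_{\cS_\p}(\Ext_B(M,k)_\p)$; for instance over $k[x]_{(x)}$ both $k[x]/(x)$ and $k[x]/(x^n)$ have total Betti number $2$ but lengths $1$ and $n$. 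So equality of jump loci does not, by itself, yield equality of the generic lengths appearing in your multiplicity decomposition.

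The paper avoids exactly this pitfall by first performing a reduction to maximal complexity: after flat base change to an infinite residue field and a Noether normalisation, one passes to an intermediate Koszul extension $B'$ (equivalently, forgets some of the operators via \cref{l_intermediatedef}) so that $\Ext_{B'}(M,k)$ is finite over $\cS'$ and $M$ has full complexity relative to $B'$. The relevant prime is then the generic point $(0)$ of $\Spec\cS'$, where $\cS'_{(0)}$ is a field; over a field, ``rank'' and ``length'' coincide and the derived fibre agrees with the ordinary localization, so $\crk_{(0)}(M)=\rank_{\cS'_{(0)}}\Ext_{B'}(M,k)_{(0)}=2\bdeg^B(M)$ (this is \cref{l_BdegvsSupp} via \cref{c_mult_betti}). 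That is precisely the datum the jump loci do see. In short: your target statement about generic lengths is true, but it cannot be extracted from $\V^r_\varphi(M)=\V^r_\varphi(M^*)$ directly --- you must first normalise so that the relevant prime is $(0)$. Replacing your additivity-formula step with this reduction would bring your proposal in line with the paper's argument.
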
 

The theorem above is contained in \cref{t_main} where it is stated in
terms of Betti degrees; see \cref{d_bdeg}. These values, studied in
\cite{VPD,RationalPoincare,CD2,IW}, are normalized leading
coefficients for the quasi-polynomials eventually corresponding to
sequences of Betti numbers. Theorem \ref{i_ma} can also be proven
using work of Eisenbud, Peeva and Schreyer \cite{EPS}; see Remark
\ref{r_EPS} for a discussion of this connection. From \cref{i_ma} we
deduce that the Betti degree of a module of finite complete
intersection dimension and its dual coincide; see
\cref{cor_cidim}. Another consequence is the following.

\begin{introcorollary}\label{i_cor}
  If \(A\) is Gorenstein, the leading terms of quasi-polynomials
  eventually modeling the Betti numbers and the Bass numbers of \(M\)
  are the same.
\end{introcorollary}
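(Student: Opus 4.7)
The plan is to reduce \cref{i_cor} to \cref{i_ma} by identifying the Bass numbers of $M$ with a shift of the Betti numbers of its derived dual $M^*$.

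First I would observe that $B$ inherits the Gorenstein property from $A$: since $\ker\varphi$ is generated by an $A$-regular sequence, $B$ is Gorenstein of Krull dimension $d\ceq\dim A-c$. Two standard consequences are used below: $\RHom_B(k,B)\simeq \shift^{-d} k$, and the derived dual $(-)^*=\RHom_B(-,B)$ restricts to an involution on the bounded derived category of finitely generated $B$-modules, so that $M\simeq M^{**}$ in this category.

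The core calculation is then the chain of natural isomorphisms
\begin{align*}
\RHom_B(k,M)
&\simeq \RHom_B(k,\RHom_B(M^*,B)) \\
&\simeq \RHom_B(k\ot_B M^*,B) \\
&\simeq \RHom_B(M^*,\RHom_B(k,B)) \\
&\simeq \shift^{-d}\RHom_B(M^*,k),
\end{align*}
assembled from biduality of $M$, tensor--Hom adjunction, and the Gorenstein formula for $\RHom_B(k,B)$. Passing to cohomology yields $\mu_B^i(M)=\beta_{i-d}^B(M^*)$ for every $i$, where $\mu_B^i(M)\ceq\dim_k\Ext_B^i(k,M)$ denotes the $i$th Bass number.

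Since shifting the argument of a quasi-polynomial leaves its leading term unchanged, \cref{i_ma} applied to $M$ and $M^*$ then forces the leading terms of the quasi-polynomials eventually modeling $\mu_B^i(M)$ and $\beta_i^B(M)$ to coincide. The main substantive step is the biduality $M\simeq M^{**}$, which rests on classical Gorenstein duality and should present no genuine obstacle; the remainder is a formal manipulation of adjunctions in the derived category.
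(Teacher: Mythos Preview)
Your proposal is correct and follows essentially the same route as the paper: it derives the isomorphism \(\Ext_B(M^*,k)\cong \shift^{s}\Ext_B(k,M)\) from Gorenstein duality for \(B\) (you supply the explicit adjunction and biduality steps, while the paper simply invokes this isomorphism), and then applies \cref{i_ma} to conclude. There is no substantive difference in strategy.
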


The proof of \cref{i_ma} is geometric in nature, and does not rely on
special properties of resolutions with respect to the duality
functor. We show that the Betti degree is encoded in a sequence of
varieties, refining the support theory of Avramov and Buchweitz,
studied and extended by many others in local algebra
\cite{VPD,SV,AIRestricting,BW,Jor,Pol2}. Cohomological supports have
yielded applications in revealing asymptotic properties for local
complete intersection maps in \emph{loc.\@ cit.\@}, and more recently,
their utility has been detecting the complete intersection property
among surjective maps and maps of essentially finite type
\cite{BGP,BILP,Pol}. Below we discuss properties of the support theory
presented in this article, and direct the curious reader to their
construction in \cref{d_jumploci}.

We associate to \(M\) a nested sequence of Zariski closed subsets of
\(\mathbb{P}_k^{c-1}\), called the \emph{cohomological jump loci} of
\(M\),
\[
  \mathbb{P}_k^{c-1} = \V_\varphi^0(M) \supseteq \V_\varphi^1(M)
  \supseteq \V_\varphi^2(M) \supseteq \ldots\,.
\]
The first jump locus \(V_\varphi^1(M)\) is the support of
\(\Ext_B(M,k)\) over \(\cS\), and hence it coincides with the
cohomological support of \(M\) studied by Avramov \emph{et.\@ al.}
From a geometric perspective, the sequence of cohomological jump loci
can be arbitrarily complicated: any nested sequence of closed subsets
of \(\mathbb{P}_k^{c-1}\) can be realized as the sequence jump loci of
some \(B\)-module, up to re-indexing; see \cref{t_realizability}.

This theory is analogous to the jump loci in \cite{jump} for
differential graded Lie algebras which have found numerous
applications in geometry and topology. The cohomological jump loci in
the present article have several interesting properties. For example,
they respect the triangulated structure of derived categories in an
``additive" sense; see \cref{p_intersection} for a precise
formulation. We highlight two properties here. First, upon a reduction
to the case of maximal complexity, the sequence of cohomological jump
loci for \(M\) encodes its Betti degree; this is the content of
\cref{l_BdegvsSupp}. The second property, found in \cref{t_ma}, is the
following.

\begin{introtheorem}
  Let \(\varphi \colon A \to B\) be a surjective complete intersection
  map. If \(M\) is a finitely generated \(B\)-module whose projective
  dimension over \(A\) is finite, then there are the equalities
  \(\V_\varphi^i(M)=\V_\varphi^i(M^*)\) for all \(i\geqslant 0\).
\end{introtheorem}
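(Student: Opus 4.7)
The plan is to reduce to a pointwise statement on $\mathbb{P}_k^{c-1}$ via hypersurface sections and then conclude via matrix factorizations. Fix a closed point $[\alpha] \in \mathbb{P}_k^{c-1}$ represented by a non-zero linear form $\alpha = \sum a_i \chi_i \in \cS_2$. Lifting the coefficients $a_i \in k$ to $A$, the corresponding linear combination of the generators of $\ker(\varphi)$ produces an element $f_\alpha \in \ker(\varphi)$ that is part of an $A$-regular sequence, and so yields a factorization $A \to B_\alpha \ceq A/(f_\alpha) \to B$ in which $B_\alpha \to B$ is itself a complete intersection of codimension $c-1$. The single cohomology operator attached to the hypersurface map $A \to B_\alpha$ is the class of $\alpha$, and by Avramov's change-of-rings theory the local behaviour of $\Ext_B(M,k)$ as an $\cS$-module near $[\alpha]$ is governed by $\Ext_{B_\alpha}(M,k)$ as a $k[\alpha]$-module.

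The first step is to unwind \cref{d_jumploci} so that the condition $[\alpha] \in \V_\varphi^i(M)$ is equivalent to an explicit numerical invariant of $M$ as a $B_\alpha$-module --- plausibly a rank condition on a Koszul-type differential coming from $\alpha$, or an asymptotic condition on $\beta_n^{B_\alpha}(M)$ along a fixed parity. Since $M$ has finite projective dimension over $A$, Eisenbud's theorem realizes the eventually $2$-periodic tail of its minimal free resolution over $B_\alpha$ as a matrix factorization $(d_1, d_2)$ of $f_\alpha$. The $B_\alpha$-derived dual exchanges these strands: it is given by $(d_2^{T}, d_1^{T})$, which preserves the ranks involved and so preserves the invariant in question, up to a swap in parity. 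To transfer this conclusion from the $B_\alpha$-dual back to the $B$-dual $M^* = \RHom_B(M,B)$, I would use the natural isomorphism $\RHom_{B_\alpha}(M, B_\alpha) \simeq \RHom_B(M, B)[-(c-1)]$ coming from the fact that $B$ is perfect over $B_\alpha$ with dualizing complex a shift of $B$; the homological shift is invisible to the jump loci.

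The main obstacle I anticipate is the first step: identifying the intrinsic, globally defined locus $\V_\varphi^i(M)$ at $[\alpha]$ with something computable from the single hypersurface $B_\alpha$. The Avramov--Buchweitz support equality, i.e.\ the case $i=1$, should both guide this identification and serve as a sanity check. A secondary technical issue is the parity swap induced by transposition of the matrix factorization, but this should be harmless for the jump loci themselves since they see the pair $(d_1,d_2)$ symmetrically; at worst one must organize the Betti invariants into even/odd pieces and observe that duality interchanges them. Once the pointwise reduction is in place, the matrix factorization duality handles all $i \geqslant 0$ uniformly, and the theorem follows.
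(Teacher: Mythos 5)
Your plan recovers the paper's own strategy at the macroscopic level: reduce to closed points of $\mathbb{P}_k^{c-1}$ (equivalently, coheight one primes, after extending to an algebraically closed residue field) and then argue over the intermediate hypersurface. The paper's \cref{t_ma} does exactly this with $B' = A\langle e_1\rangle \simeq A/(f_\alpha)$ after a linear change of coordinates. Where you diverge is in the duality mechanism over the hypersurface: you invoke Eisenbud's matrix factorization theorem and use the familiar fact that $(d_1,d_2)\mapsto(d_2^{\mathsf T},d_1^{\mathsf T})$ preserves ranks, whereas the paper stays at the level of dg $\cS'$-modules over $\cS' = k[\chi_1]$, showing $\RHom_{B'}(M^*,k)\simeq\RHom_{\cS'}(\RHom_{B'}(M,k),\cS')$ via adjunction and an Eilenberg--Moore comparison (\cref{l_twisted_comparison}). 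These two arguments encode the same periodicity, but the paper's version avoids Cohen--Macaulay hypotheses and works uniformly for dg algebras, which is why \cref{t_ma} is stated in that generality. The paper itself acknowledges in \cref{r_EPS} that a matrix-factorization proof in the Eisenbud--Peeva--Schreyer style is available, so your route is a recognized alternative. Your handling of the $B_\alpha$-dual versus $B$-dual discrepancy via $\RHom_{B_\alpha}(M,B_\alpha)\simeq\shift^{-(c-1)}\RHom_B(M,B)$ is also correct, and indeed the paper uses the analogous identity $(-)^* = \shift^c\RHom_A(-,A)$ throughout.

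The piece you flag as ``the main obstacle'' --- identifying $\crk_\p(M)$ at $[\alpha]$ with an invariant computed from the minimal resolution of $M$ over $B_\alpha$ --- is not a minor gap but the technical heart of the argument; it is precisely \cref{l_intermediatedef} in the paper, which proves $\RHom_{B_\alpha}(M,k)\simeq\RHom_B(M,k)\ot_\cS\cS/(\chi_2,\ldots,\chi_c)$, so that localizing at $(0)$ over $k[\chi_1]$ computes the generic rank. Once one has that, $\crk_\p(M)$ is the free rank of $\Ext_{B_\alpha}(M,k)$ over $k[\chi_1]$, which is twice the stable Betti number, i.e.\ twice the rank of the matrix factorization --- exactly the invariant your transposition argument preserves. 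So your outline is viable and essentially complete modulo proving that lemma, but you should be explicit that this is the step doing the real work; it is not something the definition of $\V_\varphi^i$ hands you for free, and it relies on the twisted tensor construction in \cref{c_cohomogyops} together with a base-change argument. You should also state up front that you pass to an algebraically closed residue field (via a flat local extension, as in \cref{l_flatmaps}) and that the reduction to closed points uses that the jump loci are closed and conical (\cref{p_closed}); both are quick but necessary.
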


\subsection*{Outline}
In \cref{sec_def} we introduce the theory of cohomological jump
loci. This is done in greater generality than discussed above. Namely
we let \(A\) be a local differential graded (=dg) algebra and consider
a Koszul complex \(B\) on a finite list of elements in \(A_0\); in
this context \(M\) is a dg \(B\)-module that is perfect over \(A\). A
number of examples are provided and we conclude the section with our
realizability result, discussed above, in \cref{t_realizability}.

In \cref{sec_props} we establish basic and important properties of
cohomological jump loci. The main result of the section is that the
cohomogical jump loci of \(M\) and \(M^*\) are the same; this is the
subject of \cref{t_ma}.  Finally, \cref{sec_betti} specializes to the
context of the introduction, and to modules of finite complete
intersection dimension.  This contains applications to local algebra
like \cref{i_ma,i_cor}, discussed above.

\section{Definitions and examples}\label{sec_def}
Throughout this article \((A,\m, k)\) is a fixed commutative
noetherian local dg algebra. That is, \(A=\{A_i\}_{i\geqslant 0}\) is
a nonnegatively graded, strictly graded-commutative dg algebra with
\((A_0,\m_0,k)\) a commutative noetherian local ring, and the homology
modules \(\h_i(A)\) are finitely generated over \(\h_0(A)\).

We fix a list of elements \(\f = f_1, \dots, f_c\) in \(\m_0\) and set
\[ B \coloneqq A\langle e_1, \dots, e_c ~|~ \del e_i = f_i \rangle \]
to be the Koszul complex on \(\f\) over \(A\)---that is, \(B\) is the
exterior algebra over \(A\) on exterior variables \(e_1,\ldots,e_n\)
of degree \(1\) with differential uniquely determined, via the Leibniz
rule, by \(\del e_i = f_i\). This will be regarded as a dg
\(A\)-algebra in the standard fashion, and we let
\(\varphi \colon A\to B\) be the structure map.

We will also denote throughout 
\[ \cS \coloneqq k[\chi_1, \dots, \chi_c], \] %
the graded polynomial algebra over \(k\) generated by polynomial
variables \(\chi_i\) of cohomological degree \(2\). We refer to
\( \cS\) as the ring of cohomology operators (over \(k\))
corresponding to \(\varphi\); this name is justified in
\cref{c_cohomogyops}.

\begin{remark}\label{r_dg_to_rings}
  If \(A\) is a local ring (that is, concentrated in degree 0), as in
  the introduction, then \(B\) is quasi-isomorphic to \(A/(\f)\) under
  the additional assumption that \(\f\) is an \(A\)-regular
  sequence. In this case, everything that follows directly translates
  to the setting where we instead define \(B = A/(\f)\) from the
  beginning, cf.\@ \cite[Theorem~6.10]{FHT}.
\end{remark}

We let \(\D(B)\) denote the derived category of dg \(B\)-modules. This
is a triangulated category in the usual way; see \cite[Section
3]{HPC}. Restricting along the structure map \(A\to B\) defines a
functor \(\D(B)\to \D(A)\). Through this map objects of \(\D(B)\) are
regarded as objects of \(\D(A).\) It will be convenient for us to work
in the following subcategory of \(\D(B)\).

\begin{definition}\label{d_dbrel}
  Let \(\dbrel{B}{A}\) denote the full subcategory of \(\D(B)\)
  consisting of dg \(B\)-modules which are perfect when restricted to
  \(\D(A)\). That is, \(M\) belongs to \(\dbrel{B}{A}\) provided that,
  while viewed as an object of \(\D(A)\), it belongs to the smallest
  thick subcategory containing \(A.\) This category is denoted
  \(\D^{\varphi\text{--b}}(B)\) in \cite{GS}.
\end{definition}

\begin{remark}
  When \(A\) is a regular local ring, \(\dbrel{B}{A}\) is simply the
  bounded derived category of dg \(B\)-modules; namely,
  \(\dbrel{B}{A}\) is exactly the full subcategory of \(\D(B)\)
  consisting of dg \(B\)-modules with finitely generated homology over
  the ring \(A\), which is often denoted \(\D^{\rm{b}}(B).\)
\end{remark}
The utility of this category is due to a theorem of Gulliksen
\cite[Theorem~3.1]{G} which is recast in the following construction.

\begin{chunk}\label{c_cohomogyops}
  If \(M\) is an object of \(\dbrel{B}{A}\) then \(\RHom_B(M,k)\) can
  naturally be given the structure of a perfect dg \(\cS\)-module.

  Indeed,  \(\RHom_B(M,k)\) is quasi-isomorphic to
  \(\Hom_A(F,k)\otimes_k \cS,\)  with the twisted
  differential
  \[ \del^{\Hom(F,k)} \otimes 1 + \sum_{i=1}^n \Hom(e_i-, k) \otimes \chi_i \] %
  where \(F\xra{\simeq} M\) is a semifree resolution of \(M\) over
  \(B\).  This defines a dg \(\cS\)-module structure that is
  independent of choice of \(F\) up to quasi-isomorphism;
  cf. \cite[Section~2]{CD2}. When we need to refer to this dg
  \(\cS\)-module explicitly, it will be denoted
  \(\RHom_A(M,k)\otimes_k^\tau \cS\); this notation is used, for
  example, in \cref{t_ma}.

  We point out that \(F\xra{\simeq} M\) can be taken to be any dg
  \(B\)-module map where the underlying graded \(A\)-module of \(F\)
  is a finite coproduct of shifts of \(A\), provided such an \(F\)
  exists. When \(A\) is a ring, the existence of such a resolution is
  contained in \cite[2.1]{CD2}. If such a resolution exists, then one
  can show that \( \Hom_A(F,k)\otimes_k^\tau \cS\) is a perfect dg
  \(\cS\)-module arguing as in \cite{AG,AI2,Pol2}. However, at this
  level of generality, the existence of such resolutions has not been
  established, and so we argue in a different fashion.

  Under the identification of \(\RHom_B(M,k)\) with
  \( \RHom_A(M,k)\otimes_k^\tau\cS\) we have the following
  quasi-isomorphism
  \[ \dfrac{\RHom_B(M,k)}{(\bm{\chi})\RHom_B(M,k)}\simeq \RHom_A(M,k) \] %
  and because \(M\) is perfect over \(A\), the homology module
  \(\h(\RHom_A(M,k))\) is a finite \(k\)-space. It follows by a
  homological version of Nakayama's lemma, see for example
  \cite[Theorem~3.2.4]{Pol2}, that \(\Ext_B(M,k)=\h(\RHom_B(M,k))\) is
  finitely generated over \(\cS\). Finally, since \(\cS\) has finite
  global dimension, we conclude that \(\RHom_B(M,k)\) is perfect when
  regarded as a dg \(\cS\)-module as claimed.
\end{chunk}

When \(A\) is an (orindary) ring and \(\f\) is an \(A\)-regular
sequence, \(\cS^{2}\) is the usual \(k\)-space of operators associated
to \(\f\) in the works of Avramov \cite{VPD}, Eisenbud \cite{Eis},
Gulliksen \cite{G}, Mehta \cite{Mehta}, and others; this is clarified
in \cite{AS}.

\begin{remark}\label{r:hh}
  While our focus is on the \(\cS\)-action on \(\Ext_B(M,k)\), the
  cohomology operators \(\bm{\chi}\) do lift to elements of
  \(\Ext_B(M,M)\), and we will use this in \ref{c_koszulobjects}
  below.

  Indeed, mimicking the proof of \cite[Proposition~2.6]{CD2}, it
  follows that the operators \(\bm{\chi}\) defining \(\cS\) can be
  realized as elements in the Hochschild cohomology of \(B\) over
  \(A\). More precisely, with \(B_A^e\) denoting the enveloping dg
  algebra of \(B\) over \(A\), there is an isomorphism of dg algebras
  \[ \RHom_{B_A^e}(B,B)\simeq B[\chi_1,\ldots,\chi_c] \] %
  where each \(\chi_i\) is in cohomological degree \(2\). This
  quasi-isomorphism yields a homomorphism
  \(B[\chi_1,\ldots,\chi_c]\to \RHom_B(M,M)\), through which
  \(\Ext_B(M,M)\) obtains an action of the cohomology
  operators. Furthermore, the natural projection 
  \(\pi\colon B[\chi_1,\ldots,\chi_c] \to \cS\) determines the same
  \(\cS\)-action as the one discussed in \cref{c_cohomogyops} on
  \(\RHom_B(M,k)\) for any dg \(B\)-module \(M\).
\end{remark} 

Let \(\Spec \cS\) denote the set of homogeneous prime ideals of
\(\cS\) with the Zariski topology, having closed sets of the form 
\[
  \cV(\eta_1,\ldots,\eta_t)
  = \{\p\in \Spec \cS: \eta_i\in \p\text{ for all }i\}
\]
for some list of homogeneous elements \(\eta_1,\ldots, \eta_t\) in
\(\cS.\) For a graded \(\cS\)-module \(X\) and \(\p\in \Spec \cS\) we
write \(X_\p\) for the (homogeneous) localization of \(X\) at
\(\p\). Furthermore, \(\kp\) will be the graded field
\(\kp\coloneqq \cS_\p/\p\cS_\p.\)

Given a graded field \(\kappa\), any finitely generated
\(\kappa\)-module \(X\) has the form \(\kappa^{r}\) for some $r$, and
below we use the notation \(\rank_\kappa X=r\).

\begin{definition}\label{d_jumploci}
  Let \(\p\) be in \(\Spec \cS\) and \(M\) be in \(\D(B)\). Define the
  \emph{cohomological rank of \(M\) at \(\p\)} to be
  \[ \crk_\p(M) \ceq \rank_{\kp}\h(\RHom_B(M,k) \ot_\cS \kp). \] %
  
  The \emph{\(i^{\text{th}}\) cohomological jump locus} of \(M\) is
  defined to be
  \[ \V^i_\varphi(M) \ceq \{\p \in \Spec \cS : \crk_\p(M) \geqslant i\}.\]
\end{definition}

\begin{remark}\label{r_basicremark}
  For a dg \(B\)-module \(M\), trivially \(\V_\varphi^0(M)=\Spec \cS\)
  and there is a descending chain of subsets of \(\Spec \cS\):
  \begin{equation}\label{e_descendingchain}
    \V_\varphi^0(M) \supseteq \V_\varphi^1(M) \supseteq \V_\varphi^2(M) \supseteq \ldots .
  \end{equation} 
  Hence when \(M\) is in \(\dbrel{B}{A}\), this chain must stabilize
  at \(\varnothing\) since \(\RHom_B(M,k)\) is perfect over \(\cS\) by
  \cref{c_cohomogyops}.

  If \(M\) is in \(\dbrel{B}{A}\) we have that \(\V_\varphi^1(M)\) is
  simply the support of \(\Ext_B(M,k)\) regarded as a graded
  \(\cS\)-module; this is contained in \cite[Theorem~2.4]{CI}. That
  is,
  \begin{align*}
    \V_\varphi^1(M) &= \{\p\in \Spec \cS: \Ext_B(M,k)_\p\neq 0\}\\
                    &=\cV(\eta_1,\ldots,\eta_t)
  \end{align*}
  where \(\eta_1,\ldots, \eta_t\) generate
  \(\textrm{ann}_{\cS}\Ext_B(M,k).\) In particular,
  \(\V_\varphi^1(M)\) is a closed subset of \(\Spec \cS\), provided
  \(M\) is in \(\dbrel{B}{A}.\) Looking ahead, in \cref{p_closed}, we
  show that \(\V_\varphi^i(M)\) is closed for all \(i\), whenever
  \(M\) is in \(\dbrel{B}{A}\).
\end{remark}

\begin{remark}\label{r_supp_var}
  When \(A\) is a ring, \(\V_\varphi^1(M)\) is the cohomological
  support of \(M\) over \(B\) as defined in \cite{Pol,Pol2}; these are
  derived versions of the support varieties in local algebra studied
  in \cite{VPD,SV,AIRestricting,Jor}.
\end{remark}

\begin{chunk}\label{c_totalbetti}
  Let \(X\) be a dg \(\cS\)-module with finitely generated
  homology. The \emph{total Betti number} of \(X\) is
  \[ \tbetti(X)= \sum_{i\in \mathbb{Z}}\rank_k\Tor^\cS_i(X,k); \] %
  the sum is only over finitely many integers as \(\cS\) has finite
  global dimension.
\end{chunk}

\begin{example}\label{ex_perfect}
  Assume \(M \) is a perfect dg \(B\)-module and
  \(r=\tbetti(\RHom_B(M,k))\). It follows directly that
  \[
    \V_\varphi^i\left(M\right) =
    \begin{cases}
      \Spec\cS & i =0 \\
      \{(\bm{\chi})\} & 1\leqslant i \leqslant r \\
      \varnothing & i>r.
    \end{cases}
  \] 
\end{example}

\begin{example}\label{e_koszulcx}
  Let \(\nu\) denote the embedding dimension of \(A_0\) and let
  \(K^A\) denote the Koszul complex on a minimal generating set for
  the maximal ideal of \(A_0\) over \(A\). As \(\f\) is contained in
  \(\m_0\), there is a dg \(B\)-module structure on \(K^A\) which is
  explained below: Fixing a minimal generating set
  \(\bm{x}=x_1,\ldots, x_\nu\) for \(\m_0\) with \(\del e_i'=x_i\) in
  \(K^A\) and writing each
  \[ f_i=\sum_{j=1}^\nu a_{ij} x_j, \] %
  determines a \(B\)-action on \(K^A\) by
  \[ e_i\cdot\omega = \left(\sum_{j=1}^\nu a_{ij} e_j' \right)
    \omega. \] %
  In particular, if \(\f\subseteq \m_0^2\) it follows from
  \cref{c_cohomogyops} that there is the following isomorphism of
  graded \(\cS\)-modules
  \[
    \RHom_B(K^A,k)
    \cong\Hom_A(K^A,k) \otimes_k \cS
    \cong\bigwedge \left(\shift^{-1} k^{\nu }\right) \otimes_k \cS
  \] %
  and hence, \(\crk_\p (K^A) = 2^{\nu}\). Therefore, there are the
  following equalities
  \[
    \V^i_\varphi\left(K^A\right) =
    \begin{cases}
      \Spec\cS & i \le 2^{\nu} \\
      \varnothing & i > 2^{\nu}.
    \end{cases}
  \]
  When \(A\) is a regular local ring, we have calculated the sequence
  of jump loci \(\V_\varphi^i(k)\) since \(K^A\xrightarrow{\simeq} k\)
  as dg \(B\)-modules.
\end{example}

\begin{example}\label{e_homotopies}
  Assume \(A\) is a regular local ring (or more generally, a UFD) and
  consider \(R\coloneqq A/(\f)\) where \(\f=f_1,f_2\). When \(\f\) is
  a regular sequence, \(B\xrightarrow{\simeq}R\) and so from
  \cref{ex_perfect} we have the equalities
  \[
    \V^i_\varphi\left(R\right) =
    \begin{cases}
      \Spec\cS & i =0 \\
      \{(\bm{\chi})\} & i=1 \\
      \varnothing & i > 1.
    \end{cases}
  \] 
  
  Now assume \(\f\) does not form an \(A\)-regular sequence; in this
  case there exists an \(A\)-regular sequence \(f_1',f_2'\) with
  \(f_i=f_i'g\) for some \(g\in \m_0\). It follows that
  \[
    0 \to
    A\xra{\begin{pmatrix} -f_2'\\f_1' \end{pmatrix}}
    A^2 \xra{\begin{pmatrix} f_1 & f_2\end{pmatrix}}
    A \to
    0
  \]
  is an \(A\)-free resolution of \(R\), and this has a dg \(B\)-module
  structure with the \(e_1\) and \(e_2\) action indicated by
  \[
    \begin{tikzcd}[
      ampersand replacement=\&, 
      every label/.append style = {font = \normalsize},
      row sep=large]      
      e_1 : \quad 0
      \&[-1.5em] A   \ar[l]
      \&[1.5em]  A^2 \ar[l,"{\displaystyle \begin{pmatrix} 0 & g \end{pmatrix}}"']
      \&         A   \ar[l,"{\displaystyle \begin{pmatrix} 1 \\ 0 \end{pmatrix}}"'] 
      \&[-1.5em] 0   \ar[l] \\      
      e_2 : \quad 0
      \& A   \ar[l]
      \& A^2 \ar[l,"{\displaystyle \begin{pmatrix} -g & 0 \end{pmatrix}}"']
      \& A   \ar[l,"{\displaystyle \begin{pmatrix} 0 \\ 1 \end{pmatrix}}"'] 
      \& 0.  \ar[l]
    \end{tikzcd}
  \]
  
  It follows easily, using \cref{c_cohomogyops}, that \(\RHom_B(R,k)\)
  is isomorphic to the complex of free \(\cS\)-modules:
  \[
    0 \to
    \shift^{-4} \cS \xra{0}
    \shift^{-2} \cS^{\oplus 2} \xra{\begin{pmatrix}\chi_1 & \chi_2 \end{pmatrix}}
    \cS \to
    0.
  \]
  Therefore, assuming \(k\) is algebraically closed, 
  \[
    \V_\varphi^i(R) =
    \begin{cases}
      \Spec \cS & i\leqslant 2 \\
      \{(\chi_1,\chi_2)\} & i=3,4 \\
      \varnothing & i>4.
    \end{cases}
  \]
\end{example}

\begin{example}
  Let \(A=k\lb x,y,z\rb\) and set \(\f=x^3,y^3,z^3.\) For the
  \(A/(\f)\)-module \(M=A/(\f,xz,yz^2)\). Using similar calculations
  as the ones in \cref{e_homotopies} it follows that
  \[
    \V_\varphi^i(M) =
    \begin{cases}
      \Spec\cS & i \leqslant 8 \\
      \cV(\chi_1) & 9 \leqslant i \leqslant 12 \\
      \cV(\chi_1,\chi_2) & 13 \leqslant i \leqslant 14 \\
      \{(\bchi)\} & 15 \leqslant i \leqslant 16 \\
      \varnothing & i > 16.
    \end{cases}
  \]
  In particular, this example produces a complete flag in
  \(\mathbb{A}^3\) from an indecomposable \(A/(\f)\)-module.
\end{example}

We end this section with the following realizability theorem that,
roughly speaking, says there is essentially no restriction on the
sequence of closed subsets that appear as the sequence of jump loci
for a fixed dg \(B\)-module.  This is a higher order version of the
realizability results for supports corresponding to a deformation (or
Koszul complex); see \cite{AI,BW,Pol2}.
\begin{theorem}\label{t_realizability}
  If \(\f\subseteq \m_0^2\), then for every descending chain of closed
  subsets
  \[
    \Spec\cS
    = W_0 \supsetneq W_1 \supsetneq W_2 \supsetneq \ldots \supsetneq W_t
    = \varnothing
  \]
  there exists \(M\) in \(\dbrel{B}{A}\) and an increasing sequence of
  integers \(0=j_0<j_1<\ldots<j_t\) such that
  \[ \V_\varphi^{j}(M)=W_i \] %
  for \(j_i\leq j < j_{i+1}\).
\end{theorem}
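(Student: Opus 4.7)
The strategy is to realize each $W_i$ as the jump locus of a single dg $B$-module with constant positive cohomological rank, and then take a direct sum. Concretely, for $1 \le i \le t-1$ I construct $M_i \in \dbrel{B}{A}$ with $\crk_\p(M_i) = c_i > 0$ for $\p \in W_i$ and $\crk_\p(M_i) = 0$ for $\p \notin W_i$. Setting $M \ceq M_1 \oplus \cdots \oplus M_{t-1}$, additivity of $\RHom_B(-,k)$, homology, and localization gives $\crk_\p(M) = \sum_i \crk_\p(M_i)$, which equals $c_1 + \cdots + c_i$ exactly for $\p \in W_i \setminus W_{i+1}$ (with $W_0 \ceq \Spec \cS$ and $W_t = \varnothing$). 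Then $j_0 \ceq 0$ and $j_i \ceq 1 + c_1 + \cdots + c_{i-1}$ for $1 \le i \le t$ form the required strictly increasing sequence, with $\V_\varphi^j(M) = W_i$ for $j_i \le j < j_{i+1}$, and $\V_\varphi^j(M) = \varnothing = W_t$ for $j \ge j_t$.

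To build $M_i$, I take the base module $N \ceq K^A$. The hypothesis $\f \subseteq \m_0^2$ ensures via \cref{e_koszulcx} that $\RHom_B(N,k) \cong \bigwedge(\shift^{-1} k^\nu) \otimes_k \cS$ as a graded $\cS$-module, in particular a free $\cS$-module of rank $2^\nu$, so $\crk_\p(N) = 2^\nu$ at every $\p \in \Spec \cS$. Choose homogeneous generators $\eta_{i,1}, \ldots, \eta_{i,s_i}$ of the defining ideal of $W_i$ in $\cS$; the Hochschild lift recalled in \cref{r:hh} realizes each $\eta_{i,j}$ as an element of $\Ext_B(N,N)$, allowing the formation of the iterated Koszul object $M_i \ceq \Kos(\eta_{i,1}, \ldots, \eta_{i,s_i}; N)$ in $\D(B)$ by successive mapping cones. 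The compatibility of the Hochschild and $\cS$-actions asserted in \cref{r:hh} guarantees that applying $\RHom_B(-,k)$ to each defining triangle $\shift^{-|\eta_{i,j}|} L \xra{\eta_{i,j}} L \to \Kos(\eta_{i,j}; L)$ translates the self-map into multiplication by $\eta_{i,j} \in \cS$ on $\RHom_B(L,k)$. Tensoring with $\kp$: if $\p \in W_i$ every $\eta_{i,j}$ vanishes in $\kp$, so each step splits and doubles the rank; if $\p \notin W_i$ some $\eta_{i,j}$ is a unit in $\kp$, so the corresponding step becomes acyclic. Iterating yields $c_i \ceq 2^{\nu + s_i}$ on $W_i$ and $0$ off $W_i$. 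That $M_i \in \dbrel{B}{A}$ follows because this subcategory is closed under shifts and cones.

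The main obstacle is carrying out the second step rigorously: verifying that the iterated cone built from the Hochschild lifts of the $\eta_{i,j}$ induces, on $\RHom_B(-,k)$, the expected multiplication maps by $\eta_{i,j}$ viewed in $\cS$. Once this is extracted from the compatibility statement in \cref{r:hh}, the splitting-versus-acyclicity dichotomy is a routine iterated calculation. The hypothesis $\f \subseteq \m_0^2$ is indispensable precisely because, through $K^A$, it supplies a base module of constant nonzero cohomological rank on all of $\Spec\cS$, enabling the direct-sum assembly in the first paragraph to stratify exactly along the prescribed chain.
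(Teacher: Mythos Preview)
Your proposal is correct and follows essentially the same route as the paper: both take $K^A$ as the base object (using $\f\subseteq\m_0^2$ so that $\RHom_B(K^A,k)$ is free over $\cS$), form the Koszul objects $\kos{K^A}{\bm{\eta}^i}$ on generators of the defining ideal of each $W_i$, and then take the direct sum. The paper packages the compatibility you flag as the ``main obstacle'' into the isomorphism $\RHom_B(\kos{M}{\bm{\eta}},k)\simeq \Kos^{\cS}(\bm{\eta})\otimes_{\cS}\RHom_B(M,k)$ recorded in \cref{c_koszulobjects}, which is exactly the statement you extract from \cref{r:hh}; your rank computation $c_i=2^{\nu+s_i}$ is in fact more precise than the paper's, which has a minor typo in the corresponding bound.
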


For a fixed dg \(B\)-module \(M\), we call the numbers
\(j_0,\ldots,j_t\) in \cref{t_realizability}, at which the jump loci
change, \emph{the jump numbers of} \(M\). It follows from
\cref{l_BdegvsSupp} below that the first jump number is always even.
The last jump number \(j_t\) is always \(\tbetti(\RHom_B(M,k))\); see
\cref{c_totalbetti}.

An essential ingredient in the proof of \cref{t_realizability} is the
theory of Koszul objects introduced by Avramov and Iyengar in
\cite{AI}.

\begin{chunk}\label{c_koszulobjects}
  Fix a dg \(B\)-module \(M\) and \(\eta\) as in \(\cS\). Lifting
  \(\eta\) to \(B[\chi_1,...,\chi_c ]\) along \(\pi\) in \cref{r:hh}
  determines a morphism \(\tilde{\eta}\) in \(\D(B)\)
  \[ M\xra{\tilde{\eta}}\shift^{|\eta|}M. \] %
  A Koszul object on \(M\) with respect to \(\eta\) is the mapping
  cone of \(\tilde{\eta}\), denoted \(\kos M \eta\); we point out
  that \(\kos M \eta\) is \emph{not} unique, even up to isomorphism,
  in \(\D(B).\) Given a sequence \(\bm{\eta}=\eta_1,\ldots,\eta_n\) in
  \(\cS\) we define \(\kos{M}{\bm{\eta}}\) inductively as \(M_n\)
  where
  \[
    M_{i+1}\coloneqq \kos{M_i}{\eta_{i+1}}
    \quad \text{with} \quad
    M_0 = M.
  \]
  It is a direct calculation that \(\RHom_B(\kos{M}{\bm{\eta}},k)\) is
  isomorphic to
  \[ \Kos^{\cS}(\bm{\eta})\otimes_{\cS} \RHom_B(M,k) \] %
  as dg \(\cS\)-modules, up to a shift; in particular,
  \(\RHom_B(\kos{M}{\bm{\eta}},k)\) is independent of the chosen lifts
  \(\tilde{\eta}_i\) of each \(\eta_i\) along \(\pi.\)
\end{chunk}

\begin{proof}[Proof of \cref{t_realizability}]
  Write each \(W_i\) as \(\cV(\bm{\eta}^i)\) for some list of elements
  \(\bm{\eta}^i\) from \(\cS\) of length \(n_i.\) Define \(M^i\) to be
  \(\kos{K^A}{\bm{\eta}^i}\); see \cref{c_koszulobjects}. It follows
  from \cref{e_koszulcx} that \(\RHom_B(M^i,k)\) is isomorphic to
  \[ \Kos^{\cS}(\bm{\eta}^i)\otimes_k \bigwedge \shift^{-1}k^\nu \] %
  as dg \(\cS\)-modules, up to shift, where \(\nu\) denotes the
  minimal number of generators for \(\m_0.\) From here it is clear
  that
  \[ \V_\varphi^j(M^i)=\cV(\bm{\eta}^i) \] %
  for all \(j=1,\ldots, n_i\) and \(\V_\varphi(M^i)=\varnothing\) for
  all \(j>n_i.\) The dg \(B\)-module
  \[ M \coloneqq M^1 \oplus \ldots \oplus M^{t-1} \] %
  has the desired properties.
\end{proof}

\section{Basic properties}\label{sec_props}
We adopt the notation set in \cref{sec_def}. In this section we show
the support theory introduced in the previous section satisfies
several important properties.

\begin{proposition}\label{p_closed}
  Let \(M\) be in \(\dbrel{B}{A}\). For each \(i\geqslant 0\), the
  jump locus \( \V^i_\varphi(M)\) is a Zariski closed subset of
  \(\Spec \cS\).
\end{proposition}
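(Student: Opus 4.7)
The plan is to exploit the perfectness of $\RHom_B(M,k)$ over $\cS$ to reduce the statement to a standard semicontinuity argument for ranks of differentials in a bounded complex of free modules.

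First, since $M$ lies in $\dbrel{B}{A}$, \cref{c_cohomogyops} tells us that $\RHom_B(M,k)$ is a perfect dg $\cS$-module. Hence there is a quasi-isomorphism $F \xrightarrow{\simeq} \RHom_B(M,k)$ where $F = (\cdots \to F_{n+1} \xrightarrow{d_{n+1}} F_n \to \cdots)$ is a bounded complex of finitely generated graded free $\cS$-modules. Because each $F_n$ is flat, for every $\p \in \Spec \cS$ we obtain an isomorphism
\[
  \RHom_B(M,k) \ot_\cS \kp \simeq F \otimes_\cS \kp,
\]
and $F \otimes_\cS \kp$ is a bounded complex of finite-dimensional graded $\kp$-vector spaces.

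The key formula is the usual rank-nullity identity for complexes over a (graded) field: applied to $F \otimes_\cS \kp$, it yields
\[
  \crk_\p(M) \;=\; \sum_n \rank_\cS F_n \;-\; 2\sum_n \rank_{\kp}(d_n \otimes_\cS \kp).
\]
The first sum is independent of $\p$, so to prove $\V^i_\varphi(M)$ is closed it is enough to show each function $\p \mapsto \rank_{\kp}(d_n \otimes \kp)$ is lower semicontinuous on $\Spec \cS$. Choosing bases that respect the internal grading, the differential $d_n$ is represented by a matrix with homogeneous entries in $\cS$, and the set
\[
  \{\p \in \Spec \cS : \rank_{\kp}(d_n \otimes \kp) \geqslant r\}
\]
is precisely the locus where some $r \times r$ minor of $d_n$ does not lie in $\p$, i.e.\ the complement of the common vanishing locus of all $r \times r$ minors. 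This is a Zariski open set. Thus $\rank_{\kp}(d_n \otimes \kp)$ is lower semicontinuous in $\p$, so $\crk_\p(M)$ is upper semicontinuous, and $\V^i_\varphi(M) = \{\p : \crk_\p(M) \geqslant i\}$ is closed.

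There is no real obstacle; the mild subtlety is just making sure the rank-of-minors argument goes through over the graded ring $\cS$, but this is routine since $\cS$ is $\mathbb{Z}$-graded by cohomological degree and everything in sight (bases, entries of $d_n$, minors) can be chosen homogeneous, so the vanishing loci of minors are homogeneous closed subsets of $\Spec \cS$.
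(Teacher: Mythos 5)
Your proof is correct and takes essentially the same route as the paper. Both proofs use the perfectness of $\RHom_B(M,k)$ over $\cS$ (from \cref{c_cohomogyops}) to replace it by a model with free underlying $\cS$-module, reduce to a rank-nullity identity over the graded field $\kp$, and then invoke semicontinuity of matrix ranks to conclude. The only difference is cosmetic: where you phrase the semicontinuity directly in terms of vanishing of $r\times r$ minors of the differential, the paper's \cref{l_closed} packages the same idea as the support of $\bigwedge^{r+i}(C\oplus C)$ for $C = \coker\del^X$ --- since $\rank_{\kp}(C\otimes_\cS\kp) = r - \rank_{\kp}(\del\otimes_\cS\kp)$, the nonvanishing of this exterior power at $\p$ is exactly your minor condition on $\del$. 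One small presentational caveat: the free model of $\RHom_B(M,k)$ is most naturally a single dg $\cS$-module with one grading and one differential $\del$ (as in \cite[Theorem~4.8]{HPC}), rather than a bounded complex $(\cdots\to F_{n+1}\to F_n\to\cdots)$ of graded $\cS$-modules; but this does not affect the substance, since the minor argument applies verbatim to the single homogeneous matrix representing $\del$, giving $\crk_\p(M) = r - 2\rank_{\kp}(\del\otimes_\cS\kp)$ in place of your summed version.
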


This follows from the following standard lemmas. 

\begin{lemma}\label{l_rank_dg_k_module}
  Fix a graded field \(\kappa\), and let \(X\) be a finitely generated
  dg \(\kappa\)-module. Then
  \[
    \rank_\kappa \h(X)
    = 2 \rank_\kappa \left(\coker \del^X\right) - \rank_\kappa X.
  \]
\end{lemma}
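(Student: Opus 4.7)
The plan is to reduce this to the standard rank calculation familiar from complexes over a field, using that a graded field behaves, for the purposes of rank and short exact sequences, just like an ordinary field. First I would note that since \(\kappa\) is a graded field, every finitely generated graded \(\kappa\)-module is free of some (well-defined) rank, and the rank is additive on short exact sequences of graded \(\kappa\)-modules (shifts do not affect ranks in our sense). In particular this applies to each of \(X\), \(\ker \del^X\), \(\operatorname{im} \del^X\), \(\coker \del^X\), and \(\h(X)\).

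Next I would introduce the cycles \(Z \ceq \ker \del^X\) and boundaries \(B \ceq \operatorname{im} \del^X\). The short exact sequence
\[ 0 \to Z \to X \xra{\del^X} \shift B \to 0 \]
(the shift coming from the degree of the differential, which is irrelevant for rank) gives \(\rank_\kappa X = \rank_\kappa Z + \rank_\kappa B\). Similarly the defining short exact sequences \(0 \to B \to X \to \coker \del^X \to 0\) and \(0 \to B \to Z \to \h(X) \to 0\) yield
\[ \rank_\kappa \coker \del^X = \rank_\kappa X - \rank_\kappa B, \qquad \rank_\kappa \h(X) = \rank_\kappa Z - \rank_\kappa B. \]

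Combining these identities, I would compute
\[ 2 \rank_\kappa(\coker \del^X) - \rank_\kappa X = 2(\rank_\kappa X - \rank_\kappa B) - \rank_\kappa X = \rank_\kappa X - 2\rank_\kappa B, \]
and equally
\[ \rank_\kappa \h(X) = \rank_\kappa Z - \rank_\kappa B = (\rank_\kappa X - \rank_\kappa B) - \rank_\kappa B = \rank_\kappa X - 2\rank_\kappa B, \]
from which the claimed equality is immediate. There is no real obstacle here; the only point requiring any care is verifying that the graded rank over \(\kappa\) is additive on short exact sequences of graded modules, which one justifies by invoking that every finitely generated graded \(\kappa\)-module splits as a direct sum of shifted copies of \(\kappa\).
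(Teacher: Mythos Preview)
Your proof is correct and essentially identical to the paper's: the paper invokes additivity of rank on short exact sequences and reads off exactly the three sequences you use (with \(I=\operatorname{im}\del^X\) and \(K=\ker\del^X\)), only packaging them as the rows and columns of a single \(3\times 3\) commutative diagram rather than listing them separately. No substantive difference.
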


\begin{proof}
  Let $B$ and $Z$ denote the boundaries and cycles of $X$. Since rank is additive on exact sequences, the desired statements
  follow immediately from the following diagram with exact rows and
  columns.
  \[
    \begin{tikzcd}[baseline=(current bounding box.south)]
      ~        & 0 \ar[d]        & 0 \ar[d]               & 0 \ar[d]                    &   \\
      0 \ar[r] & B \ar[r] \ar[d] & Z \ar[r] \ar[d]        & \h(X) \ar[r] \ar[d]         & 0 \\
      0 \ar[r] & B \ar[r] \ar[d] & X \ar[r] \ar[d]        & \coker \del^X \ar[r] \ar[d] & 0 \\
      0 \ar[r] & 0 \ar[r] \ar[d] & \shift B \ar[r] \ar[d] & \shift B \ar[r] \ar[d]      & 0 \\
      ~        & 0               & 0                      & 0                           &
    \end{tikzcd}
    \qedhere
  \]
\end{proof}

\begin{lemma}\label{l_closed} 
  Let \(X\) be a dg \(\cS\)-module which, upon forgetting its
  differential, is free of rank of \(r\) over \(\cS\), and set
  \(C = \coker \del^X\).  For each \(i \geqslant 0\), there is an
  equality
  \[ 
    \Supp_\cS \left(\bigwedge^{r+i}(C \oplus C)\right)
    = \{\p \in \Spec\cS : \rank_{\kp} \h(X \otimes_\cS \kp) \geqslant i\},
  \]
  and so, in particular, the right-hand set above is a Zariski closed
  subset of \(\Spec \cS.\)
\end{lemma}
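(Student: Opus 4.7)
The plan is to fix a prime $\p \in \Spec\cS$, use the previous lemma to rewrite $\rank_{\kp}\h(X \otimes_\cS \kp)$ in terms of $\coker\del$ after base change, then translate the resulting rank inequality into the non-vanishing of an exterior power stalk, and finally invoke graded Nakayama to identify that stalk with membership in the support.

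First I would fix $\p$ and base change. Because $X$ is $\cS$-free of rank $r$ on underlying graded modules, $X \otimes_\cS \kp$ is a dg $\kp$-module whose underlying graded $\kp$-module is free of rank $r$. Applying \cref{l_rank_dg_k_module} over the graded field $\kp$ gives
\[
\rank_{\kp}\h(X \otimes_\cS \kp) = 2\,\rank_{\kp}\!\bigl(\coker\del^{X \otimes_\cS \kp}\bigr) - r.
\]
Since $-\otimes_\cS \kp$ is right exact, $\coker\del^{X \otimes_\cS \kp} \cong C \otimes_\cS \kp$. Hence the inequality $\rank_{\kp}\h(X \otimes_\cS \kp) \geqslant i$ is equivalent to $\rank_{\kp}\bigl((C \oplus C)\otimes_\cS \kp\bigr) \geqslant r + i$.

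Next I would encode this rank inequality via an exterior power. For any graded $\kp$-module of finite rank $n$, its $m^{\text{th}}$ exterior power is nonzero precisely when $m \leqslant n$. Because $\bigwedge^m$ commutes with base change, we have a canonical isomorphism
\[
\Bigl(\bigwedge\nolimits^{r+i}(C \oplus C)\Bigr) \otimes_\cS \kp \;\cong\; \bigwedge\nolimits^{r+i}_{\kp}\bigl((C \oplus C)\otimes_\cS \kp\bigr),
\]
so this tensor product is nonzero if and only if $r + i \leqslant \rank_{\kp}\bigl((C \oplus C) \otimes_\cS \kp\bigr) = 2\,\rank_{\kp}(C \otimes_\cS \kp)$, which by the previous paragraph is equivalent to $\rank_{\kp}\h(X \otimes_\cS \kp) \geqslant i$.

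To finish I would pass from tensor-with-$\kp$ non-vanishing to non-vanishing of the localization. Since $C$ is a finitely generated $\cS$-module (as a quotient of the finitely generated $X$), so is $\bigwedge^{r+i}(C \oplus C)$; the graded version of Nakayama's lemma (applied to the graded-local ring $\cS_\p$ with residue field $\kp$) gives
\[
\Bigl(\bigwedge\nolimits^{r+i}(C \oplus C)\Bigr)_\p = 0 \iff \Bigl(\bigwedge\nolimits^{r+i}(C \oplus C)\Bigr) \otimes_\cS \kp = 0.
\]
Chaining the three equivalences yields the claimed description of the support. The Zariski closedness is then automatic: the support of any finitely generated graded $\cS$-module equals $\cV$ of its annihilator. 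The main point to be careful about is simply that all the linear-algebraic facts invoked (the rank formula, compatibility of $\bigwedge$ with base change, non-vanishing of top exterior powers, and Nakayama) hold verbatim in the graded setting over the graded field $\kp$; this should be straightforward given the setup in \cref{sec_def}.
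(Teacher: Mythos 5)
Your proof is correct and follows essentially the same route as the paper's: apply \cref{l_rank_dg_k_module} to $X\otimes_\cS\kp$, translate the resulting rank inequality into non-vanishing of $\bigwedge^{r+i}(C\oplus C)\otimes_\cS\kp$, and then identify this with membership in the support. You spell out a couple of steps the paper leaves tacit (that $\coker\del^{X\otimes_\cS\kp}\cong C\otimes_\cS\kp$ by right exactness, and the graded Nakayama argument relating $(-)\otimes_\cS\kp$ non-vanishing to $(-)_\p$ non-vanishing for finitely generated modules), but these are exactly the implicit justifications underlying the paper's proof, so the two arguments coincide in substance.
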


\begin{proof}
  Fix \(\p \in \Spec\cS\). Applying \cref{l_rank_dg_k_module} to
  \(X \otimes_\cS \kp\) gives 
  \[
    \rank_{\kp} \h(X \otimes_\cS \kp)
    = 2 \rank_{\kp}\left( C \otimes_\cS \kp \right)- r,
  \]
  from which we obtain the equivalence
  \[
    \rank_{\kp}\h(X \otimes_\cS \kp) \geqslant i \iff 
    \rank_{\kp} \left((C \oplus C) \otimes_\cS \kp\right)\geqslant r + i.
  \]
  We are done once noting the latter statement is true precisely when
  \[
    \left(\bigwedge^{r+i}(C \oplus C)\right) \otimes_\cS \kp
    = \bigwedge^{r+i} \left((C \oplus C) \otimes_\cS \kp)\right)
    \ne 0. \qedhere
  \]
\end{proof}

\begin{proof}[Proof of \cref{p_closed}]
  First, since \(M\) is perfect as a dg \(A\)-module, \(\RHom_B(M,k)\)
  is perfect as dg \(\cS\)-module by \cref{c_cohomogyops}. This means
  there is a quasi-isomorphism of dg \(\cS\)-modules
  \(\RHom_B(M,k)\simeq X\), where \(X\) is a dg \(\cS\)-module with
  underlying \(\cS\)-module being free of finite rank; see
  \cite[Theorem~4.8]{HPC}. Hence we may apply \cref{l_closed} to \(X\)
  to obtain
  \[ \V^i_\varphi(M) = \Supp_\cS \bigwedge^{r+i}(C \oplus C) \] %
  where \(C = \coker \del^{X}\) and \(r\) is the rank of \(X\)
  regarded as a free \(\cS\)-module.
\end{proof}

\begin{chunk}\label{c_extension}
  Let \(\psi\colon A_0\to A_0'\) be a flat local extension, and write
  \(k'\) for the residue field of \(A_0'\). Denote the corresponding
  dg algebras by \(A'=A\otimes_{A_0}A_0'\) and
  \( B'=B\otimes_{A_0}A_0'\), the induced homomorphism by
  \(\varphi'\colon A'\to B'\), and the corresponding ring of
  cohomology operators by \(\cS'=\cS\otimes_kk'\). Then there is an
  induced map on spectra
  \[ \psi^*\colon \Spec \cS'\to \Spec \cS. \] %
  The next result explains how the cohomological jump loci behave with
  respect to these maps.
\end{chunk} 

\begin{lemma}\label{l_flatmaps}
  With notation as in \cref{c_extension} above, if \(M\) is an object
  of \(\dbrel{B}{A}\) then \(M'=M\otimes_AA'\) is an object of
  \(\dbrel{B'}{A'}\) and for all \(i\)
  \[ \V_\varphi^i(M)=\psi^*\big(\V_{\varphi'}^i(M')\big). \]
\end{lemma}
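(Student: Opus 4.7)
The plan is to identify \(\RHom_{B'}(M',k')\) with the scalar extension of \(\RHom_B(M,k)\) along \(\cS\to\cS'\), and then translate the equality of jump loci to faithful flatness of this extension. First, \(M'\in\dbrel{B'}{A'}\) is immediate: if \(P\xra{\simeq}M\) is a bounded resolution by finite coproducts of shifts of \(A\), then \(P\otimes_A A'\xra{\simeq}M'\) is such a resolution over \(A'\), because \(A_0\to A_0'\) is flat.

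Next, I fix a semifree resolution \(F\xra{\simeq}M\) over \(B\); then \(F'\ceq F\otimes_A A' \xra{\simeq} M'\) is semifree over \(B'\). Using the adjunction \(\RHom_{B'}(F\otimes_B B',k')\simeq \RHom_B(F,k')\) together with \(k\)-flatness of \(k'\) and the explicit twisted-differential description from \cref{c_cohomogyops}, every ingredient involved commutes with base change along \(A_0\to A_0'\), producing a quasi-isomorphism of dg \(\cS'\)-modules
\[ \RHom_{B'}(M',k') \;\simeq\; \RHom_B(M,k)\ot_\cS \cS'. \]

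Now fix \(\p'\in\Spec\cS'\) and set \(\p\ceq\psi^*(\p')\). Contracting primes induces an extension of graded fields \(\kp\hookrightarrow \kappa(\p')\); since it is faithfully flat, both homology and rank are preserved under \(-\otimes_{\kp}\kappa(\p')\). Combined with the display above, this gives \(\crk_{\p'}(M')=\crk_\p(M)\), and hence \((\psi^*)^{-1}(\V_\varphi^i(M))=\V_{\varphi'}^i(M')\), which already yields \(\psi^*(\V_{\varphi'}^i(M'))\subseteq \V_\varphi^i(M)\). For the reverse inclusion, since \(\cS'=\cS\otimes_k k'\) is faithfully flat over \(\cS\) (the map \(k\to k'\) being a field extension), \(\psi^*\) is surjective on spectra, so each \(\p\in\V_\varphi^i(M)\) lifts to some \(\p'\) with \(\crk_{\p'}(M')=\crk_\p(M)\geqslant i\).

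The main obstacle is the identification in the second paragraph: rigorously verifying that the dg \(\cS'\)-action on \(\RHom_B(M,k)\ot_\cS \cS'\) really coincides with the intrinsic cohomology-operator action on \(\RHom_{B'}(M',k')\). This amounts to unraveling the twisted differential \(\del^{\Hom(F,k)}\otimes 1 + \sum \Hom(e_i-,k)\otimes \chi_i\) of \cref{c_cohomogyops} and checking that each factor is stable under base change along \(A_0\to A_0'\), \(B\to B'\), and \(\cS\to \cS'\). Once this naturality statement is in place, the remainder of the proof is formal.
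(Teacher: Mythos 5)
Your proposal is correct and follows essentially the same route as the paper: both proofs rest on the base-change identification \(\RHom_{B'}(M',k')\simeq\RHom_B(M,k)\ot_\cS\cS'\) (equivalently, the chain of quasi-isomorphisms \(\RHom_{B'}(M',k')\ot_{\cS'}\kappa(\p')\simeq\RHom_B(M,k)\ot_\cS\kappa(\p)\otimes_{\kappa(\p)}\kappa(\p')\)), after which the equality of jump loci follows from the definition. Your write-up is a bit more explicit than the paper's terse "the lemma follows directly"; in particular you make visible the two small points the paper leaves implicit, namely that faithful flatness of \(\kp\hookrightarrow\kappa(\p')\) preserves rank (giving \(\crk_{\p'}(M')=\crk_\p(M)\)) and that faithful flatness of \(\cS\to\cS'\) makes \(\psi^*\) surjective, which is what upgrades the containment to an equality.
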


\begin{proof}
  Let \(\p'\) be a prime of \(\Spec \cS'\) and set
  \(\p=\psi^*\p'\). There are isomorphisms
  \begin{align*}
    \RHom_{B'}(M',k')\ot_{\cS'}\kappa(\p')
    & \cong \RHom_{B}(M,k)\ot_{\cS} \cS'\ot_{\cS'}\kappa(\p')\\
    & \cong \RHom_{B}(M,k)\ot_{\cS} \kappa(\p)\otimes_{\kappa(\p)}\kappa(\p').
  \end{align*}
  Knowing this, the lemma follows directly from the definition of
  cohomological jump loci; see \cref{d_jumploci}.
\end{proof}

\begin{lemma}\label{l_twisted_comparison}
  Let \(M, N\) be in \(\dbrel{B}{A}\). Suppose
  \[
    q \colon \RHom_A(M,k) \otimes_k \cS
    \to \RHom_A(N,k) \otimes_k \cS
  \]
  is a dg \(\cS\)-module map such that the underlying map of
  \(\cS\)-modules remains a chain map between the twisted complexes
  \[
    q^\tau \colon \RHom_A(M,k) \otimes_k^\tau \cS
    \to \RHom_A(N,k) \otimes_k^\tau \cS.
  \]
  Then \(q\) is a quasi-isomorphism if and
  only if \(q^\tau\) is a quasi-isomorphism.
\end{lemma}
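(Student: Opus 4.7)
The plan is to compare both $q$ and $q^\tau$ to a common auxiliary map obtained by reducing modulo $(\bchi)$, and then deduce the twisted-versus-untwisted comparison via a derived Nakayama argument. Let $V \ceq \RHom_A(M,k)$ and $W \ceq \RHom_A(N,k)$, modeled as bounded-below dg $k$-modules; this is possible since $M$ and $N$ are perfect over $A$, and will be essential later. Because $q$ is $\cS$-linear, it is determined by its restriction to $V \otimes_k 1$, and so decomposes by $\chi$-degree as $q = q_\emptyset + q_1 + q_2 + \cdots$, where $q_\alpha$ has $\chi$-degree $|\alpha|$; in particular $q_\emptyset \colon V \to W$ is a well-defined $k$-linear chain map.

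Form the mapping cones $C \ceq \mathrm{cone}(q)$ and $C^\tau \ceq \mathrm{cone}(q^\tau)$. These are dg $\cS$-modules sharing a common underlying graded $\cS$-module, namely a finite free $\cS$-module, and they differ only in that the differential of $C^\tau$ includes the twist terms $\sigma_V \ceq \sum \Hom(e_i,k) \otimes \chi_i$ and $\sigma_W$ of $\chi$-degree $+1$. Reducing modulo $(\bchi)$ (i.e., applying $-\otimes_\cS k$) kills both the $\sigma$-terms and all components $q_\alpha$ with $|\alpha| \geqslant 1$, giving the common identification
\[ C \otimes_\cS k \;=\; C^\tau \otimes_\cS k \;=\; \mathrm{cone}(q_\emptyset \colon V \to W). \]
Since $q$ (resp.\ $q^\tau$) is a quasi-isomorphism if and only if $C$ (resp.\ $C^\tau$) is acyclic, the lemma will follow once I prove
\[ C \text{ acyclic} \iff \mathrm{cone}(q_\emptyset) \text{ acyclic} \iff C^\tau \text{ acyclic}. \]

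The forward direction of each equivalence is immediate: the underlying $\cS$-module of $C$ (and of $C^\tau$) is free, so $-\ot_\cS k$ agrees with $-\otimes_\cS k$ on it, and an acyclic dg $\cS$-module has acyclic derived tensor product with $k$. For the converse I will filter $C$ and, in parallel, $C^\tau$ by the $\bchi$-adic filtration $F^p \ceq (\bchi)^p C$. The crucial technical input---ensured by the bounded-below models of $V$ and $W$---is that multiplication by any element of $(\bchi)^p$ raises cohomological degree by at least $2p$, so this decreasing filtration is finite in each cohomological degree. On the associated graded every differential term that multiplies by some $\chi_i$ vanishes, leaving $\mathrm{gr}^p C \cong \cS_p \otimes_k \mathrm{cone}(q_\emptyset)$, and identically for $C^\tau$. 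If $\mathrm{cone}(q_\emptyset)$ is acyclic, then $E_1 = H(\mathrm{gr}) = 0$, and the resulting spectral sequence, which converges strongly by degreewise boundedness, forces $H(C) = H(C^\tau) = 0$.

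The main obstacle is exactly the strong convergence of the spectral sequence in the twisted case: since the twisted differential is not internally $\chi$-graded but only $\chi$-filtered, one really does need the filtration to be finite in each cohomological degree, which is why choosing bounded-below models for $V$ and $W$ is essential. Once this is in hand the argument is the standard filtered-complex comparison.
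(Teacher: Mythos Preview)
Your proof is correct and follows essentially the same approach as the paper's: both use the $(\bchi)$-adic filtration and the observation that the twisted and untwisted complexes (and the maps between them) coincide on associated graded, the paper simply packaging this as an invocation of the Eilenberg--Moore comparison theorem \cite[Theorem~5.5.11]{Wei} where you spell out the cone-plus-spectral-sequence argument by hand. Your attention to the degreewise boundedness of the filtration (ensured by taking bounded models for $V$ and $W$) is precisely the completeness/exhaustiveness hypothesis that makes Eilenberg--Moore apply.
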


\begin{proof}
  This follows directly from the Eilenberg--Moore comparison theorem
  \cite[Theorem 5.5.11]{Wei} following the observation that the
  ordinary and twisted complexes coincide upon passing to their
  associated graded complexes with respect to the \((\bchi)\)-adic
  filtration.
\end{proof}

\begin{lemma}\label{l_intermediatedef}
  Consider, for some \(1\leq c'\leq c\), the factorization
  \(A\to B'\to B\) where
  \(B'= A\langle e_1, \dots, e_{c'} \mid \del e_i = f_i \rangle\).
  Then for any \(M\) in \(\dbrel{B}{A}\) we have
  \[ \RHom_{B'}(M,k)\simeq \RHom_B(M,k) \ot_\cS \cS/\p \] %
  as dg \(\cS\)-modules where
  \(\p=(\chi_{c'+1},\ldots ,\chi_c)\subseteq \cS\).
\end{lemma}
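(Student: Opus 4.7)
The strategy is to model both $\RHom_B(M,k)$ and $\RHom_{B'}(M,k)$ by the twisted complexes from \cref{c_cohomogyops}, built from a single common resolution of $M$, and then read off the claimed quasi-isomorphism as the passage from $\cS$ to its quotient $\cS/\p$.

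First I would fix a semifree resolution $F\xra{\simeq} M$ over $B$ and verify that $F$ simultaneously serves as a semifree resolution over $B'$. This is the case because $B$ is itself semifree over $B'$: as a dg $B'$-algebra, $B = B'\langle e_{c'+1},\dots,e_c\mid \del e_i = f_i\rangle$ is the Koszul complex over $B'$ on the sequence $f_{c'+1},\dots,f_c$, hence its underlying graded $B'$-module is free on exterior monomials. Composing the semifree filtration of $F$ over $B$ with this free structure of $B$ over $B'$ produces a semifree filtration of $F$ over $B'$. Moreover, the $A$-module structure of $F$ is unchanged, since restriction to $A$ factors through $B'$.

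Applying \cref{c_cohomogyops} to the maps $A\to B$ and $A\to B'$, with $F$ as the chosen resolution in both cases, then produces
\[
  \RHom_B(M,k)\simeq \Hom_A(F,k)\otimes_k^\tau \cS
  \quad\text{and}\quad
  \RHom_{B'}(M,k)\simeq \Hom_A(F,k)\otimes_k^\tau \cS',
\]
where $\cS'=k[\chi_1,\dots,\chi_{c'}]$. In the first model the twisted differential uses all operators $\chi_1,\dots,\chi_c$; in the second only $\chi_1,\dots,\chi_{c'}$. Crucially, the underlying graded $k$-space $\Hom_A(F,k)$ and the operators $\Hom_A(e_i\cdot -,k)$ for $i\leqslant c'$ are identical in the two models.

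To finish I would identify $\cS'=\cS/\p$ for $\p=(\chi_{c'+1},\dots,\chi_c)$ and observe that, since $\Hom_A(F,k)\otimes_k^\tau \cS$ has $\cS$-free underlying graded module, derived and ordinary tensor products with $\cS/\p$ agree. Under the quotient $\cS\to\cS/\p$, the twist contributions of $\chi_{c'+1},\dots,\chi_c$ vanish, so
\[
  \RHom_B(M,k)\ot_\cS \cS/\p
  \;\simeq\; \bigl(\Hom_A(F,k)\otimes_k^\tau \cS\bigr)\otimes_\cS\cS/\p
  \;\cong\; \Hom_A(F,k)\otimes_k^\tau \cS',
\]
and combining with the second quasi-isomorphism above identifies this with $\RHom_{B'}(M,k)$ as a dg $\cS$-module (where $\cS$ acts through the surjection $\cS\to\cS'$).

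The main obstacle is the first step: confirming that a single semifree resolution $F$ works for both computations so that the twisted-complex formula from \cref{c_cohomogyops} applies verbatim over $B$ and over $B'$ with the same underlying graded $k$-space. A useful backup is \cref{l_twisted_comparison}, which would let one compare different twisted-complex models by passing to associated graded complexes for the $(\bchi)$-adic filtration, in case the resolutions need to be chosen separately on the two sides.
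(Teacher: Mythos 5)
Your argument is correct and follows the same route as the paper: the paper's proof consists of the single line ``by direct inspection of the construction in \cref{c_cohomogyops}'' followed by the chain of identifications $\RHom_{B'}(M,k) = \RHom_A(M,k)\otimes_k^\tau\cS' \simeq (\RHom_A(M,k)\otimes_k^\tau\cS)\ot_\cS\cS/\p \simeq \RHom_B(M,k)\ot_\cS\cS/\p$. Your proposal makes that inspection explicit --- in particular your observation that a semifree resolution of $M$ over $B$ restricts to a semifree resolution over $B'$ (because $B$ is semifree over $B'$ as a Koszul extension), so that a single $F$ produces both twisted models with the same underlying graded $k$-space $\Hom_A(F,k)$ --- is exactly the content the paper leaves implicit, and your final step replacing $\ot_\cS$ by the ordinary tensor product on account of $\cS$-freeness is also the same.
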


\begin{proof}
  Let \(\cS'=k[\chi_1,\ldots ,\chi_{c'}]\) denote the ring of
  cohomology operators corresponding to \(A\to B'\). By direct
  inspection of the construction in \cref{c_cohomogyops}, we see
  \begin{align*}
    \RHom_{B'}(M,k)
    & = \RHom_A(M,k) \otimes_k^\tau \cS'\\
    & \simeq \RHom_A(M,k)\otimes_k^\tau \cS/\p \\
    & \simeq \left(\RHom_A(M,k) \otimes_k^{\tau} \cS\right) \ot_\cS \cS/\p \\
    & \simeq \RHom_B(M,k) \ot_\cS \cS/\p.\qedhere
  \end{align*}
\end{proof}

The next result is the main one from this section. For what follows,
we reserve the notation
\[(-)^* \ceq \RHom_B(-,B)\] %
for the duality functor on \(\cat{D}(B)\).  However, as \(A\to B\) is
a Koszul extension, \(B\)-duality coincides with
\(\shift^c\RHom_A(-,A)\). Thus \((-)^*\) restricts to an endofunctor
on \(\dbrel{B}{A}.\)

\begin{theorem}\label{t_ma}
  For any
  \(M\) in \(\dbrel{B}{A}\), there are equalities
  \( \V^i_\varphi(M)= \V^i_\varphi(M^*)\) for each positive integer
  \(i\). Hence \(\crk_\p M= \crk_\p M^*\) for all primes \(\p\) of
  \(\cS\).
\end{theorem}

\begin{proof}
  First, we may assume that the residue field \(k\) is algebraically
  closed by \cref{l_flatmaps} and by \cite[Appendice, \textsection
  2]{Bourbaki} (see also \cite[Theorem 10.14]{CMrepsbook}).  Since the
  jump loci are closed, conical subsets of \(\Spec \cS\) by
  \cref{p_closed}, it follows that \(\V^i_\varphi(M)\) is either
  empty, \(\{(\bchi)\}\), or the closure of the coheight one primes it
  contains. Therefore it suffices to show that
  \(\crk_\p M= \crk_\p M^*\) for all coheight one primes \(\p\) of
  \(\Spec \cS\) and for \(\p = (\bchi)\). The proof of the latter is
  essentially contained in the former, so we will proceed assuming
  \(\p\) is coheight one. Using the Nullstellensatz and a linear
  change of variables, we may further assume
  \(\p=(\chi_2,\ldots,\chi_c)\).
  
  Next, let \(B'\) denote the dg subalgebra
  \(A\la e_1 \ra \subseteq B\) and \(\cS'= k[\chi_1]\) denote the
  corresponding ring of cohomology operators for \(A\to B'\). Since
  \(\cS' = \cS/\p\), if we let \(\kappa'\) denote the residue field of
  \(\cS'\) at \((0)\), then \(\kappa' = \kp\) and hence by
  \cref{l_intermediatedef},
  \begin{align*}
    \RHom_B(M,k) \ot_\cS \kp
    &\simeq  \RHom_B(M,k) \ot_\cS \cS' \ot_{\cS'} \kappa' \\
    &\simeq \RHom_{B'}(M,k) \ot_{\cS'} \kappa'.
  \end{align*}
  Once we recall the fact that for a perfect dg \(\cS'\)-module \(N\)
  one has the equality
  \[
    \rank_{\kappa'} \h \left(N \ot_{\cS'} \kappa' \right)
    = \rank_{\kappa'} \h\left(\RHom_{\cS'}(N,\cS') \ot_{\cS'} \kappa'\right),
  \] %
  we see that it is sufficient to show
  \[ \RHom_{B'}(M^*,k)\simeq \RHom_{\cS'}( \RHom_{B'}(M,k),\cS'). \]

  To this end, observe that we have the following isomorphisms of dg
  \(\cS'\)-modules:
  \begin{align*}
    \RHom_{B'}(M^*, k) 
    &\simeq \RHom_A(M^*,k) \otimes_k^\tau \cS'\\
    &\simeq \Hom_k(k \ot_A M^*, k) \otimes_k^\tau \cS'\\
    &\simeq \Hom_k(\RHom_A(M,k),k) \otimes_k^\tau \cS';
  \end{align*}
  the second one being nothing more than adjunction, while the third
  uses the dg \(B\)-module isomorphism
  \( \RHom_A(M,k) \simeq k \ot_A M^*\) which is one place the
  assumption that \(M\) is perfect over \(A\) is being
  invoked. Furthermore, the natural maps
  \[
    \begin{tikzcd}[row sep = 4mm]
      \Hom_k(\RHom_A(M,k),k) \otimes_k \cS' \ar[d, "\simeq" sloped] \\
      \Hom_k(\RHom_A(M,k),\cS') \ar[d, "\simeq" sloped] \\
      \RHom_{\cS'}(\RHom_A(M,k) \otimes_k \cS', \cS')
    \end{tikzcd}
  \]
  are each quasi-isomorphisms of dg \(\cS'\)-modules. A direct
  computation shows that the composite map is compatible with the
  twisted differential, inducing a map
  \[
    \Hom_k(\RHom_A(M,k),k) \otimes_k^\tau \cS' \to
    \RHom_{\cS'}(\RHom_A(M,k) \otimes_k^\tau \cS', \cS'),
  \]
  which, by \cref{l_twisted_comparison}, is also a
  quasi-isomorphism. Combining this quasi-isomorphism with the
  already established ones above, we obtain the desired result.
\end{proof}

\begin{remark}\label{r_hypersurface}
  In the case that \(A\) is a local ring and \(B=A/(\f)\) is the
  quotient by a regular sequence \(\f = f_1, \dots, f_c\), we indicate
  here how to interpret the above theory more classically in terms of
  matrix factorizations.

  Fix a nonzero point \((a_1,\ldots,a_c)\) in \(k^c\) and choose lifts
  \(\tilde{a}_i\) of each \(a_i\) to \(A\). Any complex \(M\) in
  \(\D(B)\) be regarded as a \(A/(\sum \tilde{a}_if_i)\)-module
  through the factorization
  \[ A\to A/{\textstyle\left(\sum \tilde{a}_if_i\right)}\to B. \] %
  For ease of notation, let \(A_{\tilde{a}}\) denote
  \(A/(\sum \tilde{a}_if_i)\). By \cite[Theorem~2.1]{AIRestricting},
  for lifts \(\tilde{a}\) and \(\tilde{a}'\) of a point \(a\) in
  \(k^c\) there is an equality of Betti numbers
  \(\beta^{A_{\tilde{a}}}_i(M)= \beta^{A_{\tilde{a}'}}_i(M)\) for each
  integer \(i\). Hence we simply write \(\beta^{a}_i(M)\) for
  \(\beta^{A_{\tilde{a}}}_i(M)\).  Furthermore, when \(M\) is in
  \(\dbrel{B}{A}\) the sequence of values \(\beta^{a}_i(M)\)
  eventually stabilizes; this stable value is denoted
  \(\underline{\beta}^{a}(M)\), called the stable Betti number of
  \(M\) at \(a\). Moreover \(\underline{\beta}^{a}(M)\) is exactly the
  rank of the free modules appearing in a matrix factorization
  describing the tail of a free \(A_{\tilde{a}}\)-module resolution of
  \(M\); cf.\@ \cite{Eis,Shamash}. When \(A\) is Gorenstein, this is
  also the \(k\)-rank of each \emph{stable} (or \emph{Tate})
  cohomology module \(\underline{\Ext}_B^{i}(M,k).\)

  When \(k\) is algebraically closed, by invoking the Nullstellensatz,
  the (inhomogeneous) maximal ideals of \(\Spec \cS\) correspond to
  \(k^c\), affine \(c\)-space over \(k\).  In light of the discussion
  above, for each nonnegative integer \(i\), it is sensible to
  consider the following subset of \(k^c\):
  \begin{equation}\label{e_stablevariety}
    \{a\in k^c: 2\underline{\beta}^{a}(M)\geqslant i\}\cup\{0\}.
  \end{equation}
  The proof of \cref{t_ma} shows that the closed points of the cone
  over \(\V_\varphi^i(M)\) correspond exactly with the subset in
  \cref{e_stablevariety}. When \(i=1\), the subset
  \cref{e_stablevariety} is the classical support variety from
  \cite[3.11]{VPD}.
\end{remark}

We end this section with an \emph{accoutrement} demonstrating an a
priori surprising property of the cohomological jump loci when taken
in total. There are general axioms for a support theory on a
triangulated category; see, for example, the conditions specified in
\cite[Theorem~1]{BIK}. Two such axioms are: first, that the support
takes direct sums to unions, and second, the so-called
\emph{two-out-of-three property} on the supports of objects in an
exact triangle. The following proposition says that the jump loci all
together satisfy a higher-order generalization of these usual
containment properties.

\begin{proposition}\label{p_intersection}
  Given an exact triangle \(L\to M\to N\to\) in \(\dbrel{B}{A}\) there is
  the following containment of jump loci
  \[
    \V^l_\varphi(M)
    \subseteq \bigcup_{i+j=l} \V^i_\varphi(L)\cap \V^j_\varphi(N);
  \]
  equality holds when \(M\to N\) admits a section. 
\end{proposition}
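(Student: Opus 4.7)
The plan is to deduce the containment from a pointwise subadditivity $\crk_\p(M) \leq \crk_\p(L) + \crk_\p(N)$, with equality when the triangle splits. First I would apply the functor $\RHom_B(-,k)$ to the given triangle, producing an exact triangle
\[ \RHom_B(N,k) \to \RHom_B(M,k) \to \RHom_B(L,k) \to \]
of perfect dg $\cS$-modules (perfection by \cref{c_cohomogyops}). Derived tensoring with $\kp$ over $\cS$ preserves exact triangles, so I obtain an exact triangle of dg $\kp$-modules and hence a long exact homology sequence of graded $\kp$-vector spaces.

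The key observation is that $\kp$ is a graded field, so every short exact sequence of graded $\kp$-modules splits. Chopping the long exact sequence into short exact sequences at each index and summing ranks yields
\[ \rank_\kp \h\bigl(\RHom_B(M,k) \ot_\cS \kp\bigr) \leq \rank_\kp \h\bigl(\RHom_B(L,k) \ot_\cS \kp\bigr) + \rank_\kp \h\bigl(\RHom_B(N,k) \ot_\cS \kp\bigr), \]
that is, $\crk_\p(M) \leq \crk_\p(L) + \crk_\p(N)$. When $M \to N$ admits a section, the triangle splits in $\D(B)$ and $M \simeq L \oplus N$; both $\RHom_B(-,k)$ and $- \ot_\cS \kp$ respect direct sums, so the above inequality is sharpened to an equality.

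Finally I would translate this numerical inequality into the desired containment of jump loci. If $\p \in \V^l_\varphi(M)$ then $\crk_\p(L) + \crk_\p(N) \geq l$, so choosing $i \ceq \min(\crk_\p(L), l)$ and $j \ceq l - i$ produces a decomposition $i + j = l$ with $\p \in \V^i_\varphi(L) \cap \V^j_\varphi(N)$. In the split case, additivity of $\crk_\p$ gives $\crk_\p(M) = \crk_\p(L) + \crk_\p(N) \geq i + j = l$ whenever $\p \in \V^i_\varphi(L) \cap \V^j_\varphi(N)$ with $i + j = l$, delivering the reverse containment. The entire argument reduces to a single long exact sequence calculation; the only mildly subtle point is the careful bookkeeping of graded $\kp$-ranks through the homology of a triangle, and I do not foresee a serious obstacle.
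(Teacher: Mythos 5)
Your proof is correct and takes essentially the same approach as the paper's. The paper's proof is a one-liner—apply \(\RHom_B(-,k)\) followed by \(-\ot_\cS\kp\) to the exact triangle, then extract the rank inequality (and equality in the split case) from the resulting long exact sequence over the graded field \(\kp\)—which is precisely the argument you have fleshed out, including the bookkeeping that converts subadditivity of \(\crk_\p\) into the stated containment of jump loci.
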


\begin{proof}
  This follows directly from the exact triangle obtained by applying
  \(-\ot_\cS \kappa(\p)\) to the exact triangle \(L\to M\to N\to\),
  and noting that when \(M\to N\) admits a section, so does the
  corresponding induced map.
\end{proof}
\begin{remark}
  In light of \cref{p_intersection}, the higher jump loci
  \(\V_\varphi^i\) for \(i>1\) do not respect containment among thick
  subcategories of \(\dbrel{B}{A}\). This should be contrasted with
  usual support varieties \(\V_\varphi^1\) which can even be used to
  classify the thick subcategories of \(\dbrel{B}{A}\) when \(A\) is a
  regular ring and \(\f\) is an \(A\)-regular sequence; see
  \cite{LP,Stevenson}.
\end{remark}

\section{Applications to Betti degree}\label{sec_betti}

In this section \((A,\m,k)\) is a local ring, \(\f = f_1, \dots, f_c\)
is an \(A\)-regular sequence. Set \(B=A/(\f)\), and let
\(\varphi: A\to B\) be the canonical projection. As noted in
\cref{r_dg_to_rings}, we can freely apply the results from the
preceding sections while studying Ext-modules over \(B\) in the
present section.

\begin{definition}[{\cite[(3.1),(4.1)]{VPD}}]\label{d_bdeg}
  Let \(M\) be an object of \(\D(B)\). The \emph{complexity} of \(M\),
  denoted \(\cx^B(M)\), is the smallest natural number \(b\) such that
  the sequence \(\{\beta_i^B(M)\}_{n=0}^\infty\) of Betti numbers over
  \(B\), given by \(\beta_i^B(M)=\rank_k \Ext_B^i(M,k)\), is
  eventually bounded by a polynomial of degree \(b-1\). If no such
  integer exists one sets \(\cx^B(M)\) to be infinity.
  
  If \(M\) has finite complexity \(\cx^B(M) = n + 1\), the \emph{Betti
    degree} of \(M\) (over \(B\)) is defined to be
  \begin{equation}
    \label{e_bdeg}
    \bdeg^B(M) = 2^n n! \limsup_{i \to \infty} \frac{\beta_i^B(M)}{i^n}.
  \end{equation}
\end{definition}

\begin{chunk}\label{c_cx_dim}
  According to \cref{c_cohomogyops}, if \(M\) is in \(\dbrel{B}{A}\)
  then \(\Ext_B(M,k)\) is a finitely generated graded
  \(\cS\)-module. In particular, by the Hilbert-Serre Theorem,
  \(\cx^B(M)\) is exactly the Krull dimension of \(\Ext_B(M,k)\) over
  \(\cS\). Hence, \(\cx^B(M)\leq c\), and by the Nullsetellsatz,
  \(\cx^B(M)\) is the dimension of the Zariski closed subset
  \(\V_\varphi^1(M)\); cf.\@ \cite{VPD,SV}. It is worth remarking that
  the above assertions hold at the level of generality in
  \cref{sec_def}; however, the next discussion is one place we are
  forced to specialize to the setting of the present section.
\end{chunk}

\begin{chunk}\label{c_bdeg}
  Let \(M\) be in \(\dbrel{B}{A}\) with \(\cx^B(M) = n+1\). Then there
  exist polynomials \(q_{\ev}\) and \(q_{\odd}\) of degree \(n\) whose
  leading coefficients agree such that for all \(i \gg 0\)
  \[
    \beta_i^B(M) =
    \begin{cases}
      q_{\ev}(i) & i\ \text{is even} \\
      q_{\odd}(i) & i\ \text{is odd;}
    \end{cases}
  \]
  see \cite[Remark 4.2]{VPD}.  In particular, the sequence defining
  \(\bdeg^B(M)\) in \cref{d_bdeg} converges and the leading
  coefficient of both \(q_{\ev}\) and \(q_{\odd}\) is
  \({\bdeg^B(M)}/{2^nn!}\).
  
  Finally up to further scaling \(\bdeg^B(M)\) can also be realized as
  the multiplicity of \(\Ext_B(M,k)\) over \(\cS\).
\end{chunk}

\begin{chunk}\label{c_mult_betti}
  Fix \(M\) in \(\dbrel{B}{A}\) with complexity \(\cx^B(M) =
  n+1\). Let \(S\) be the polynomial ring \(\cS\) regraded so that the
  variables \(\chi_i\) are in cohomological degree 1.  We may define
  \(E\) to be the graded \(S\)-module consisting of the even degrees
  of \(\Ext_B(M,k)\), i.e.
  \[E^i = \Ext_B^{2i}(M,k).\] %
  
  When endowed with the degree filtration,
  \(E^{\geq n} = \bigoplus_{i \ge n} E^i\), the associated Hilbert
  polynomial is \(q_\ev(2t)\) as defined in \cref{c_bdeg}. In
  particular, the leading term is given by
  \[ \frac{\bdeg^B(M)/2^n}{n!}(2t)^n = \frac{\bdeg^B(M)}{n!}t^n.\] %

  When endowed with the \((\bchi)\)-adic filtration, the leading term
  of the associated Hilbert polynomial is of the form
  \[\frac{e(E)}{n!} t^n\]%
  where \(e(E)\) is the multiplicity of \(E\) as an \(S\)-module.
  Since \(E\) is finitely generated over \(S\), for all \(n\)
  sufficiently large, \(E_{n+1} = (\bchi)E_n\), and hence the leading
  terms of the two Hilbert polynomials agree, so
  \[ e(E) = \bdeg^B(M). \] %
  This is the reason for the normalization factor of \(2^nn!\) in the
  definition \cref{e_bdeg}; in particular the number \(\bdeg^B(M)\) is
  always a positive
  integer.
  
  Finally, since \(S\) is a regular integral domain, we obtain the
  equality \cite[Theorem 14.8]{Mat}
  \[ 
    e(E) = e(S) \cdot \rank_{S_{(0)}}E_{(0)} 
    = \rank_{S_{(0)}}E_{(0)}.
  \]
  Repeating this process for the module consisting of the odd degrees
  of \(\Ext_B(M,k)\) yields
  \[ \rank_{\cS_{(0)}} \Ext_B(M,k)_{(0)} = 2 \bdeg^B(M).\]
\end{chunk}

\begin{lemma}\label{l_BdegvsSupp}
  An object \(M\) of \(\dbrel{B}{A}\) has maximal complexity \(c\) if
  and only if \(\V_\varphi^1(M)=\Spec \cS\), and in this case
  \[ 2 \bdeg^B(M) =\max\left\{ i : \V^i_\varphi(M)=\Spec \cS\right\}. \]
\end{lemma}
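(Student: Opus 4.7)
\textbf{Proof plan for \cref{l_BdegvsSupp}.} The first equivalence is essentially tautological given \cref{c_cx_dim}. Namely, \cref{c_cx_dim} identifies $\cx^B(M)$ with $\dim \V_\varphi^1(M)$, and since $\Spec\cS$ is irreducible of Krull dimension $c$, a closed subset has dimension $c$ if and only if it equals $\Spec\cS$. So I would dispatch the ``if and only if'' in one line.

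For the numerical equality, the plan is to identify both sides with the cohomological rank of $M$ at the (unique) generic point $(0)\in \Spec\cS$. For the right-hand side, since $\Spec\cS$ is irreducible with generic point $(0)$, a closed subset equals $\Spec\cS$ precisely when it contains $(0)$; the jump loci are closed by \cref{p_closed}, so
\[
  \max\bigl\{i : \V_\varphi^i(M) = \Spec\cS\bigr\}
  = \max\bigl\{i : (0)\in \V_\varphi^i(M)\bigr\}
  = \crk_{(0)}(M)
\]
directly from the definition in \cref{d_jumploci}.

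For the left-hand side, I would compute $\crk_{(0)}(M)$ by replacing $\RHom_B(M,k)$ with a bounded complex $X$ of finitely generated free $\cS$-modules, which is possible since \cref{c_cohomogyops} guarantees $\RHom_B(M,k)$ is perfect over $\cS$. Noting $\kp[(0)] = \cS_{(0)}$ is the homogeneous localization at the zero ideal, which is a flat $\cS$-module, there are natural identifications
\[
  \h\bigl(\RHom_B(M,k)\ot_\cS \kp[(0)]\bigr)
  \;\cong\; \h(X)_{(0)}
  \;\cong\; \Ext_B(M,k)_{(0)}.
\]
Therefore $\crk_{(0)}(M) = \rank_{\cS_{(0)}} \Ext_B(M,k)_{(0)}$, and the last equation of \cref{c_mult_betti} identifies this with $2\bdeg^B(M)$, finishing the argument.

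The only nontrivial step is verifying that the derived tensor product with $\kp[(0)]$ reduces to ordinary localization; but this is immediate once one passes to a perfect representative and invokes flatness of localization, so I do not expect any real obstacle. Everything else is a direct unwinding of definitions and a straight appeal to \cref{c_cx_dim} and \cref{c_mult_betti}.
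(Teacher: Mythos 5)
Your proposal is correct and follows essentially the same route as the paper: reduce the first equivalence to \cref{c_cx_dim}, identify the right-hand side with the cohomological rank at the generic point $(0)$ using closedness of the jump loci, compute that rank as $\rank_{\cS_{(0)}}\Ext_B(M,k)_{(0)}$, and finish with \cref{c_mult_betti}. The only difference is that you spell out the flatness-of-localization step justifying $\h(\RHom_B(M,k)\ot_\cS\kp[(0)])\cong \Ext_B(M,k)_{(0)}$, which the paper states without elaboration.
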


\begin{proof}
  Recall from \cref{c_cx_dim}, that
  \(\cx^B(M) = \dim V_\varphi^1(M)\). From this, we see that maximal
  complexity of \(M\) is equivalent to \(\V_\varphi^1(M)=\Spec \cS\).
  
  Since the jump loci are closed, \(\V_\varphi^i(M)=\Spec \cS\) if and
  only if \((0) \in \V_\varphi^i(M)\).  However,
  \[
    \rank_{\kappa(0)}\h(\RHom_B(M,k) \ot_\cS \kappa(0))
    = \rank_{\cS_{(0)}} \Ext_B(M,k)_{(0)}
  \]
  hence 
  \[
    \max\left\{ i : \V^i_\varphi(M)=\Spec \cS\right\}
    = \rank_{\cS_{(0)}} \Ext_B(M,k)_{(0)}.
  \]
  The lemma now follows from \cref{c_mult_betti}.
\end{proof}

We remind the reader that we use the notation \((-)^*=\RHom_B(-,B)\)
for \(B\)-duality throughout, and that up to a shift, this coincides
with the \(A\)-duality \(\RHom_A(-,A)\).

\begin{theorem}\label{t_main} 
  Let \(A\to B\) be a surjective map of local rings whose kernel is
  generated by an \(A\)-regular sequence.  If \(M\) is in
  \(\dbrel{B}{A}\) then \(\bdeg^B(M) = \bdeg^B(M^*)\).
\end{theorem}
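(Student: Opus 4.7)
The plan is to reduce the statement to the case of maximal complexity, where Lemma~\ref{l_BdegvsSupp} combined with Theorem~\ref{t_ma} delivers the result directly. First, by Lemma~\ref{l_flatmaps} I may replace $A_0$ with a flat local extension having algebraically closed residue field; this preserves Betti degrees, complexity, and the duality $(-)^*$, and enables generic-position arguments in what follows.

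Assume first that $\cx^B(M) = c$. Lemma~\ref{l_BdegvsSupp} gives
\[ 2\bdeg^B(M) = \max\{i : \V^i_\varphi(M) = \Spec\cS\}, \]
and Theorem~\ref{t_ma} asserts $\V^i_\varphi(M) = \V^i_\varphi(M^*)$ for every $i$. Applying both to $M$ and $M^*$ yields $\bdeg^B(M)=\bdeg^B(M^*)$ in the maximal complexity case.

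For the general case $\cx^B(M) = n+1 < c$, I would reduce to the above via an intermediate Koszul dg algebra $A \to B' \to B$, with $B' = A\langle e_1,\dots,e_{n+1}\rangle$ built on a length-$(n+1)$ $A$-regular subsequence of $(\f)$ --- obtained after a suitable linear change of basis of $\f$ --- such that $M$ has maximal complexity $n+1$ over $B'$ and $\bdeg^{B'}(M)=\bdeg^B(M)$, and similarly for $M^*$. The passage to $B'$ is controlled by Lemma~\ref{l_intermediatedef}, which expresses $\RHom_{B'}(M,k)$ as $\RHom_B(M,k)\ot_\cS\cS'$. Granting such a construction, the already-established maximal complexity case applied to $M$ over $B'$ gives $\bdeg^{B'}(M)=\bdeg^{B'}(\RHom_{B'}(M,B'))$. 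Since $B$- and $B'$-duality both coincide with $\RHom_A(-,A)$ up to a cohomological shift, $\RHom_{B'}(M,B')$ and $M^*$ share the same Betti numbers, so combining gives $\bdeg^B(M)=\bdeg^B(M^*)$.

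The main obstacle is the construction of $B'$ so that $M$ has maximal complexity over $B'$ and Betti degree is preserved. The naive attempt --- choosing coordinates so that $\V(\chi_{n+2},\dots,\chi_c)\subseteq \V^1_\varphi(M)$ --- can fail, since an $(n+1)$-dimensional closed subvariety of $\Spec\cS$ need not contain any $(n+1)$-dimensional linear subspace. Overcoming this is the critical step, and likely demands a Noether-normalization-type argument applied to $\Ext_B(M,k)$ along its support, combined with a Hilbert-series or Euler-characteristic computation ensuring that multiplicity is preserved under the passage from $B$ to $B'$.
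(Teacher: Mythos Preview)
Your maximal-complexity case is correct and matches the paper. The reduction, however, is set up in a direction that creates the very obstacle you describe, and the paper avoids it by factoring the other way.

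You pass to $B'=A\langle e_1,\ldots,e_{n+1}\rangle$ and compute $\bdeg^{B'}(M)$; this forces you to prove $\bdeg^{B'}(M)=\bdeg^B(M)$, a multiplicity-preservation statement under change of ring that you correctly flag as unfinished. The paper instead keeps $B$ fixed and replaces the \emph{source} $A$. After Noether normalization (over an infinite residue field) makes $\Ext_B(M,k)$ finite over $k[\chi_1,\ldots,\chi_{c'}]$ with $c'=\cx^B(M)$, one sets $B'=A/(f_{c'+1},\ldots,f_c)$ and uses Lemma~\ref{l_intermediatedef} to see that $\Ext_{B'}(M,k)$ is finite-dimensional over $k$, so $M$ lies in $\dbrel{B}{B'}$. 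One then applies the maximal-complexity case to the codimension-$c'$ deformation $B'\to B$: the Betti numbers $\beta_i^B(M)$, and hence $\bdeg^B(M)$, are literally unchanged because one still computes $\Ext$ over $B$; only the ring of cohomology operators has shrunk to dimension $c'$, which makes the complexity maximal automatically.

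So your Noether-normalization instinct is exactly right, but deploy it to enlarge the base from $A$ to $B'$ rather than to shrink the target from $B$ to $B'$; then no Hilbert-series or multiplicity-preservation argument is needed at all.
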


\begin{proof} 
  We first reduce to the case of full complexity. Since Betti numbers,
  and hence the Betti degree, are unchanged by flat base change, we
  may assume that the residue field \(k\) is infinite. Recall from
  \cref{c_cx_dim} that the Krull dimension of \(\Ext_B(M,k)\) over
  \(\cS\) is equal to \(\cx(M)=c'\). By Noether normalisation we can
  make a linear change of coordinates and assume that \(\Ext_B(M,k)\)
  is finite over \(k[\chi_1,\ldots ,\chi_{c'}]\). Writing
  \(B'= A/(f_{c'+1}, \dots, f_c)\) and
  \(\p=(\chi_1,\ldots ,\chi_{c'})\subseteq \cS\) it follows from
  \cref{l_intermediatedef} that
  \[ \RHom_{B'}(M,k)\simeq \RHom_B(M,k) \ot_\cS \cS/\p. \] %
  The right-hand-side has cohomology which is finite over
  \(k[\chi_1,\ldots ,\chi_{c'}]\) (since it is built by
  \(\RHom_B(M,k)\)), and simultaneously annihilated by \(\p\);
  therefore it must be finite dimensional. This means that
  \(\Ext_{B'}(M,k)\) is bounded, and we conclude that \(M\) is in
  \(\dbrel{B}{B'}\), and it has the maximal complexity \(c'\) among
  objects of this category.

  We may now assume that \(M\) has maximal complexity within
  \(\dbrel{B}{A}\), so we can use \cref{l_BdegvsSupp} and \cref{t_ma}
  to deduce
  \begin{align*}
    2\bdeg^B(M) & = \max\left\{ i : \V^i_\varphi(M)=\Spec \cS\right\}\\
                & = \max\left\{ i : \V^i_\varphi(M^*)=\Spec \cS\right\} 
                  = 2\bdeg^B(M^*).
  \end{align*}
  From this we obtain the desired equality
  \(\bdeg^B(M)=\bdeg^B(M^*)\).
\end{proof}

\begin{remark}\label{r_EPS}
  Let \(M\) be a module over a deformation \(A\to B\), as in the setup
  of Theorem \ref{t_main}. Eisenbud, Peeva and Schreyer prove in
  \cite{EPS} that the Betti degree of \(M\) is equal to the rank of a
  minimal matrix factorization for \(M\), of a generically chosen
  relation in an intermediate deformation \(A'\) (chosen as in the
  proof of Theorem \ref{t_main}); see \cite[Theorem 4.3]{EPS} for a
  precise statement. Our Theorem \ref{t_main} can also be deduced from
  this result. Conversely, \cite[Theorem 4.3]{EPS} can alternatively
  be proven using the cohomological jump loci along the lines of
  Theorem \ref{t_main}.

  Eisenbud, Peeva and Schreyer make essential use of the theory of
  higher matrix factorizations in their work. This raises the question
  of the connection between the data visible in a higher matrix
  factorization of a module \(M\) and its cohomological jump loci.
\end{remark}

The conclusion in \cref{t_main} for the quasi-polynomials governing
the Betti numbers of \(M\) and \(M^*\) cannot be improved. That is to
say, the lower order terms of the respective quasi-polynomials need
not agree.

\begin{example}
  Consider \(A=k\lb x,y\rb\) and \(B=A/(x^3,y^3)\). For
  \(M=B/(x^2,xy,y^2)\) and \(i\geqslant 0\) there are equalities
  \[
    \beta_i^B(M)=
    \begin{cases}
      \frac{3}{2}i+1 & i\text{ even} \\
      \frac{3}{2}i+\frac{3}{2} & i\text{ odd}
    \end{cases}
    \quad \text{and} \quad
    \beta_i^B(M^*)=
    \begin{cases}
      \frac{3}{2}i+2 & i\text{ even} \\
      \frac{3}{2}i+\frac{3}{2} & i\text{ odd}.
    \end{cases}
  \]
\end{example}

\begin{chunk}\label{c_ci_dim}
  We now fix a local ring \(B\) (and we forget \(A\) for a brief
  moment). Following the work of Avramov, Gasharov and Peeva
  \cite{AGP} and Sather-Wagstaff \cite{Keri}, a complex \(M\) of
  \(B\)-modules is said to have \emph{finite ci-dimension} if there
  exists a diagram of local rings
  \[ A\longrightarrow B' \longleftarrow B \] %
  in which \(B\to B'\) is flat and \(A \to B'\) is a surjective
  deformation, and such that \(M\otimes_B B'\) is isomorphic in
  \(\D(A)\) to a bounded complex of projective modules.
\end{chunk}

\begin{corollary}\label{cor_cidim}
  If \(B\) is a local ring and \(M\) is a complex of \(B\)-modules
  with finitely generated homology and finite ci-dimension, then
  \(\bdeg^B(M) = \bdeg^B(M^*)\).
\end{corollary}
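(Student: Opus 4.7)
The plan is to reduce the corollary to \cref{t_main} by means of flat base change. Choose a diagram $A\to B'\leftarrow B$ witnessing the finite complete intersection dimension of $M$, so that $B\to B'$ is flat local, $A\to B'$ is a surjection with kernel generated by an $A$-regular sequence, and $N\ceq M\otimes_B B'$ is perfect over $A$. I would begin by checking that $N$ lies in $\dbrel{B'}{A}$: its $B'$-homology is finitely generated by flatness of $B\to B'$, and it is perfect over $A$ by assumption. Applying \cref{t_main} to the deformation $A\to B'$ and the object $N$ then gives $\bdeg^{B'}(N)=\bdeg^{B'}(N^*)$, where $N^*=\RHom_{B'}(N,B')$.

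The second step is to transport this equality back along $B\to B'$. Writing $k'$ for the residue field of $B'$, flat base change yields a natural isomorphism $\Ext_B(Y,k)\otimes_k k'\cong \Ext_{B'}(Y\otimes_B B',k')$ for every $Y$ in $\dbrel{B}{A}$. Hence $\beta_i^B(Y)=\beta_i^{B'}(Y\otimes_B B')$ for all $i$, and so $\bdeg^B(Y)=\bdeg^{B'}(Y\otimes_B B')$. Taking $Y=M$ gives $\bdeg^B(M)=\bdeg^{B'}(N)$.

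The third step is to identify the two duals. I would establish the isomorphism $N^*\simeq M^*\otimes_B B'$ in $\D(B')$ by combining derived tensor-Hom adjunction along $B\to B'$ with the flatness of $B\to B'$:
$$\RHom_{B'}(M\otimes_B B',\,B')\simeq \RHom_B(M,\,B')\simeq \RHom_B(M,\,B)\otimes_B B'.$$
Feeding this into the base-change argument applied to $Y=M^*$ yields $\bdeg^B(M^*)=\bdeg^{B'}(N^*)$, and chaining the three equalities produces the conclusion $\bdeg^B(M)=\bdeg^B(M^*)$.

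The one genuinely non-formal point is the quasi-isomorphism $\RHom_B(M,B)\otimes_B B'\simeq \RHom_B(M,B')$, which uses that $M$ admits a bounded-above resolution by finitely generated projective $B$-modules so that the flat base change commutes past $\RHom_B(M,-)$; this is standard for complexes with finitely generated homology over a noetherian local ring. The remaining work is bookkeeping: verifying $N\in\dbrel{B'}{A}$ and that each flat base change respects the Betti degree.
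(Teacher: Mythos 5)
Your argument is correct and follows the same route as the paper: reduce to \cref{t_main} by flat base change, using that both Betti degree and derived duality commute with the flat local extension $B\to B'$. The paper states these two compatibility facts in one line; you have usefully spelled out the verifications (that $M\otimes_B B'\in\dbrel{B'}{A}$, the base-change isomorphism on Ext, and $\RHom_B(M,B)\otimes_B B'\simeq\RHom_{B'}(M\otimes_BB',B')$), but the underlying idea is identical.
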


\begin{proof}
  Both the duality and the Betti degree are preserved by flat base
  change, so we may assume that \(B\) admits a deformation
  \(\varphi\colon A\to B\) such that \(M\) is in \(\dbrel{B}{A}\), and
  the statement follows from \cref{t_main}.
\end{proof}

\begin{remark}\label{r_symmetry}
  Let \(M\) be a maximal Cohen-Macaulay module over \(B\) which has
  finite ci-dimension.  It is well known that \(M\) admits a complete
  resolution over \(B\), in the sense of \cite{Buch} that \(M\) is the
  cokernel of a differential in a acyclic complex of projective
  \(B\)-modules. The two ends of this complete resolution (the
  projective resolution and coresolution of \(M\)) grow
  quasi-polynomially with the same degree; see, for example,
  \cite{SV,AI2,LP}. \cref{cor_cidim} asserts that moreover the leading
  terms of these two quasi-polynomials are the same. This is in stark
  contrast with the results of \cite{JS}, where modules are exhibited
  with complete resolutions that have wildly asymmetric growth. All
  such modules must have infinite ci-dimension.
\end{remark}

We now move on to our final result. If we specialize to the case where
\(A\) is a Gorenstein ring, then Gorenstein duality allows us to form
a connection between the Betti numbers of a module and its Bass
numbers as a direct corollary to \cref{t_main}.

\begin{definition}[{\cite[(5.1)]{VPD}}]\label{d_bassdegree}
  Let \(M\) be an object of \(\D(B)\). Recall that the \(i\)-th Bass
  number of \(M\) is defined to be
  \[\mu_B^i(M) \ceq \rank_k \Ext_B^i(k,M).\]
  
  The \emph{cocomplexity} (or \emph{plexity} as used in
  \cite{SV,AvrSri_Bass}) of \(M\), denoted \(\px_B(M)\), is defined to
  be the smallest nonnegative integer \(b\) such that the sequence
  \(\{\rank_k \Ext_B^n(k,M)\}_{n=0}^\infty\) is eventually bounded by
  a polynomial of degree \(b-1\).
  
  Suppose \(\px_B(M) = n + 1\). Define the \emph{Bass degree} of \(M\)
  over \(B\) to be
  \[\mdeg_B \ceq 2^n n! \limsup_{i\to\infty} \frac{\mu_B^i(M)}{i^n}.\]
\end{definition}

\begin{corollary}\label{c_bassvsbetti}
  If \(A\) is Gorenstein then for any \(M\) in \(\dbrel{B}{A}\) 
  \[ \mdeg_B(M) = \bdeg^B(M). \] 
\end{corollary}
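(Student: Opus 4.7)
My plan is to establish the key identity \(\mu_B^j(M) = \beta_{j-d}^B(M^*)\), where \(d = \dim B\), and then invoke \cref{t_main}. Since \(A\) is Gorenstein and \(M\) is perfect over \(A\), \(A\)-biduality holds: \(M \simeq \RHom_A(\RHom_A(M, A), A)\). Combining with the identification \(\RHom_B(-,B) \simeq \shift^c \RHom_A(-, A)\) from the discussion preceding \cref{t_ma}, I obtain \(M \simeq M^{**}\) in \(\dbrel{B}{A}\), and simultaneously that \(M^*\) itself belongs to \(\dbrel{B}{A}\) so that \cref{t_main} can be applied to it.

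Next, using biduality together with hom-tensor adjunction,
\[
  \RHom_B(k, M) \simeq \RHom_B(k, \RHom_B(M^*, B)) \simeq \RHom_B(M^*, \RHom_B(k, B)).
\]
Since \(\f\) is an \(A\)-regular sequence and \(A\) is Gorenstein, \(B = A/(\f)\) is Gorenstein of dimension \(d\), and Gorenstein duality provides \(\RHom_B(k, B) \simeq \shift^{-d} k\). Substituting yields
\[
  \RHom_B(k, M) \simeq \shift^{-d} \RHom_B(M^*, k),
\]
so taking \(k\)-ranks of cohomology in degree \(j\) gives \(\mu_B^j(M) = \beta_{j-d}^B(M^*)\) for every integer \(j\).

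This identity means the quasi-polynomials eventually modeling \(\mu_B^j(M)\) and \(\beta_j^B(M^*)\) agree after the reindexing \(j \mapsto j - d\), so their leading terms coincide; in particular \(\px_B(M) = \cx^B(M^*)\) and \(\mdeg_B(M) = \bdeg^B(M^*)\). Applying \cref{t_main} to the right-hand side yields \(\bdeg^B(M^*) = \bdeg^B(M)\), completing the proof. I expect the main point requiring care to be verifying that the two adjunctions above produce genuine quasi-isomorphisms in \(\dbrel{B}{A}\) (and that the biduality \(M \simeq M^{**}\) is honest, not merely an equality of supports), but these reduce to standard manipulations once \(M\) is perfect over the Gorenstein ring \(A\).
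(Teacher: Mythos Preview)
Your proof is correct and follows essentially the same route as the paper: establish \(\mdeg_B(M)=\bdeg^B(M^*)\) via Gorenstein duality, then invoke \cref{t_main}. The paper simply asserts the graded isomorphism \(\Ext_B(M^*,k)\cong \shift^s\Ext_B(k,M)\) for some integer \(s\), while you unpack it explicitly through biduality and the swap adjunction, identifying \(s=\dim B\); one minor point is that the biduality \(M\simeq M^{**}\) already follows from \(M\) being perfect over \(A\) and the identification \(\RHom_B(-,B)\simeq \shift^c\RHom_A(-,A)\), without needing \(A\) Gorenstein at that step.
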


\begin{proof}
  This is an easy consequence of Gorenstein-duality and
  \cref{t_main}. Namely, \(A\) being Gorenstien forces \(B\) to be
  Gorenstein and so there is an isomorphism of graded \(k\)-spaces
  \[ \Ext_B(M^*,k)\cong \shift^s\Ext_B(k,M) \] %
  for some integer \(s\). Hence \(\mdeg_B(M) = \bdeg^B(M^*)\) and so
  now applying \cref{t_main}, we obtain the desired equality.
\end{proof}

\begin{question}
  Let $M$ and $N$ be two dg $B$-modules, each perfect over $A$, and
  assume that $\Ext_B(M,N)$ is degree-wise of finite length (in large
  degrees). In this context the numbers
  ${\rm length}_B \Ext^i_{B}(M,N)$ are also eventually modelled by
  quasi-polynomial $q(M,N)$ of period two; cf.\ \cite[10.3]{Buch} and
  \cite{D}. In the case that $A$ is regular, Avramov and Buchweitz
  prove, using the theory of support varieties for pairs of modules,
  that $q(M,N)$ and $q(N,M)$ have equal degrees
  \cite{SV}. \cref{c_bassvsbetti} suggests the following question:
  Assuming $A$ is Gorenstein, what is the relationship between the
  leading terms of $q(M,N)$ and $q(N,M)$?
\end{question}

\bibliographystyle{amsplain}
\bibliography{refs}
\end{document}